\newtheorem{thm}{Theorem}[section]
\newtheorem{lem}[thm]{Lemma}
\newtheorem{exam}[thm]{Example}
\newtheorem{rem}[thm]{Remark}
\newtheorem{coro}[thm]{Corollary}
\newtheorem{defn}[thm]{Definition}
\newtheorem{prop}[thm]{Proposition}
\newcommand{\mz}{\mathbb{Z}}
\newcommand{\mc}{\mathbb{C}}
\newcommand{\mr}{\mathbb{R}}
\numberwithin{equation}{section} \textheight=8.8in \textwidth=6.28in
\begin{document}

\title{A generalization of Montgomery-Yang correspondence }

\author{\bf Wei WANG \\
}
\date{\small October, 2012}

\maketitle
\begin{abstract}
\small  In this paper, we want to construct a one-to-one correspondence from the set of diffeomorphism classes of spin $d$-twisted homology $\mc P^3$ to the set of isotopy classes of the embedding from $S^3$ to $S^6$, which is a generalization of the Montgomery-Yang correspondence. Furthermore, we will apply this generalized correspondence to prove the existence of free involution on these $d$-twisted homology $\mc P^3$.
\end{abstract}
\footnotetext[1]{\textbf{MSC(2000): 57R55, 57R19}}
\footnotetext[2]{\textbf{Keywords: Montgomery-Yang correspondence, $d$-twisted homology $\mc P^3$, Involution
}}
\section{Introduction}
\subsection{Montgomery-Yang correspondence}
Let $\Pi$ be the set of diffeomorphism classes of homotopy $\mc P^3$ and $\Sigma^{6,3}$ be the set of isotopy classes of the embedding from $S^3$ to $S^6$. By \cite{Haef1} and \cite{Haef}, $\Sigma^{6,3}$ admits an abelian group structure with $\Sigma^{6,3}\cong \mz$.
In their paper \cite{MY}, Montgomery and Yang constructed a one-to-one correspondence from $\Pi$ to $\Sigma^{6,3}$:
\[
\Phi: \Pi \longrightarrow \Sigma^{6,3}.
\]
Furthermore, they constructed an operation $\ast$ on $\Pi$ to make $(\Pi,\ast)$ admit an abelian group structure with unit $\mc P^3$ and proved that this correspondence $\Phi:\Pi \longrightarrow \Sigma^{6,3}\cong \mz$ is also a group isomorphism.

By the classification theorems of closed 1-connected 6-manifold with torsion free integer homology group (cf \cite{Wall}), we may quickly know $\Pi$, as a set, is one-to-one correspondence to $\mz$. But we can not obtain the group structure of $\Pi$ from those classification theorems, which is one of the key points of the paper of Montgomery and Yang \cite{MY}.

\subsection{Main results}
In this paper, we mainly concern with the set $\Pi_{d}^{6}$ of the diffeomorphism classes of spin $d$-twisted homology $\mc P^3$, which is a class of closed 1-connected smooth spin 6-manifolds with cohomology ring $\mz [x,y]/(x^2-dy, y^2),\ degx=2,\ degy=4, \ d\in \mz$.

Follow the idea of $\cite{MY}$, we first construct an operation $\natural$ on $\Pi_{d}^{6}$ to make $(\Pi_{d}^{6},\natural)$ become an abelian group. Second, we
will construct a group isomorphism:
\begin{thm}
$\Phi: \Pi_{d}^{6}\cong \Sigma^{6,3}$.
\end{thm}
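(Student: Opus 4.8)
The plan is to realise $\Phi$ through an explicit cut-and-paste between $M\in\Pi_d^6$ and the standard $S^6$, to identify both $\Pi_d^6$ and $\Sigma^{6,3}$ with $\mz$, and then to check that $\Phi$ carries $\natural$ to the connected sum of knots. The most concrete part is the inverse construction $\Psi\colon\Sigma^{6,3}\to\Pi_d^6$, so I would build it first. Given a knot $f\colon S^3\hookrightarrow S^6$, its normal bundle is trivial, so a tubular neighbourhood is $S^3\times D^3$ and the complement $W=S^6\setminus\mathrm{int}(S^3\times D^3)$ satisfies $\partial W=S^3\times S^2$. I would then cap $W$ with a copy of $S^2\times D^4$, glued to $\partial W$ by a diffeomorphism $\phi_d$ chosen to realise the ring relation $x^2=dy$, and set $\Psi(f)=W\cup_{\phi_d}(S^2\times D^4)$. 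A Mayer--Vietoris computation, using $W\simeq S^2$ by Alexander duality, shows that the Betti numbers of this closed $6$-manifold are $1,0,1,0,1,0,1$ and that the cohomology ring is $\mz[x,y]/(x^2-dy,y^2)$; I would also verify that the result is spin and $1$-connected, so that $\Psi(f)\in\Pi_d^6$.

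Conversely, to define $\Phi$ I would represent the generator of $H_2(M;\mz)\cong\mz$ by a smoothly embedded $2$-sphere (possible since $M$ is $1$-connected and $\dim M=6$), delete the model neighbourhood playing the role of $S^2\times D^4$, and recognise the complement as a copy of a knot complement; regluing the standard $S^3\times D^3$ reconstitutes an $S^6$ in which the core $S^3$ is the desired knot $\Phi(M)$. The next task is to prove that $\Phi$ and $\Psi$ are mutually inverse bijections. Well-definedness of $\Phi$ rests on the uniqueness, up to ambient isotopy, of the embedded sphere, of its tubular neighbourhood, and of the framing used for regluing; I would establish this from general position and the isotopy-extension theorem, taking care with the borderline metastable range for $S^2\hookrightarrow M^6$ and with the spin structure. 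For the bijection itself I would invoke the classification of closed $1$-connected spin $6$-manifolds with torsion-free homology cited in the introduction, which already matches $\Pi_d^6$ with $\mz$ as a set, against Haefliger's $\Sigma^{6,3}\cong\mz$; the substantive half is $\Psi\circ\Phi=\mathrm{id}$, where one recovers $M$ from its knot by gluing the two canonical pieces and applying the $h$-cobordism theorem ($\dim\ge 5$) to promote the resulting homology equivalence to a diffeomorphism.

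Finally I would show $\Phi(M_1\natural M_2)=\Phi(M_1)+\Phi(M_2)$ for the Haefliger group law. Since $\natural$ was constructed precisely as the connected-sum-type operation on $\Pi_d^6$, the idea is to arrange the decompositions defining $\Phi(M_1)$ and $\Phi(M_2)$ so that their knot complements can be fused along a common $S^3\times S^2$, and to track the cores through the fusion, identifying $\natural$ with the connected sum of knots. I expect the main obstacle to lie in the well-definedness and bijectivity rather than in this last step: unlike the classical case $d=1$ of Montgomery and Yang, the normal bundle of the core sphere, the capping data, and the spin structure are now constrained by the parameter $d$ rather than by the standard Hopf framing, so the delicate point is to verify that this extra structure neither obstructs the canonical choice of the embedded $S^3$ nor spoils its isotopy invariance, and that it is compatible with both group laws simultaneously. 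Controlling the rank-$4$ normal bundle and the spin structure through $\natural$ is the technical crux on which the isomorphism depends.
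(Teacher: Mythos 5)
Your architecture coincides with the paper's almost step for step: the paper also defines $\Phi$ by removing a tubular neighbourhood $S^2\times D^4$ of a primary embedded $2$-sphere and gluing in $D^3\times S^3$ to obtain a homotopy $6$-sphere (hence $S^6$) containing the knotted core $S^3$; it proves surjectivity by exactly your $\Psi$ (knot complement $\simeq S^2$ by Alexander duality, $h$-cobordism theorem to identify it with $S^2\times D^4$, then cap off); and it proves the homomorphism property (Proposition 4.2) by isotoping the two clutching maps to be the identity on complementary half-disks $S^2\times D_{\mp}^3$ so that the knots add in Haefliger's sense. So there is no disagreement on strategy.

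The genuine gap is precisely the point you defer to the end as the ``technical crux'': the capping map cannot be ``a diffeomorphism $\phi_d$ chosen to realise the ring relation $x^2=dy$''. It has to be the explicit clutching involution $f_d$ of $S^2\times S^3$ constructed in subsection 3.1 from quaternion multiplication, with $f_d\circ f_d=\mathrm{id}$, $\deg(p_2 f_d j_2)=-1$ and Hopf invariant of $p_1 f_d j_2$ equal to $d$ --- i.e.\ the \emph{same} map already used to define $\natural$. This is indispensable in three places. First, surjectivity: writing $S^6=(S^2\times D^4)\cup_{kh^{-1}}(D^3\times S^3)$ for a given knot $j$, the candidate preimage is $M(\psi)$ with $\psi=f_d(kh^{-1})$, and the verification $\Phi(M(\psi))=[S^6,j]$ cancels $f_d$ against itself via $f_d f_d=\mathrm{id}$. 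Second, the homomorphism property: the clutching map of $\Phi(M\natural N)$ computes to $(f_dk^{-1}k')(f_dh^{-1}h')=\beta\alpha$ only because the identical $f_d$ appears both in the definition of $\natural$ and in the definition of $\Phi$; with an unrelated $\phi_d$ the middle factors do not recombine and $\Phi(M\natural N)=\Phi(M)+\Phi(N)$ is simply not established. Third, injectivity: it cannot be extracted from ``$\Pi_d^6$ and $\Sigma^{6,3}$ are both in bijection with $\mz$'' --- a set-level bijection of the source and target says nothing about a particular map, and even an injective homomorphism into $\mz$ need not be onto. The paper argues directly: if the knots are isotopic, the clutching maps satisfy $\lambda_1=\lambda_2 G$ with $G(x,y)=(g(y)x,y)$, $g:S^3\to SO(3)$, and equality of Hopf invariants forces $g\simeq pt$ via $\pi_3(SO(3))\cong\pi_3(S^2)$, whence $M_1\cong M_2$ (alternatively, Wall's formula $p_1(M)=(4d-24\Phi(M))y$ as in Remark 4.4). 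Your ``mutually inverse'' plan can replace this only if both composites $\Psi\circ\Phi$ and $\Phi\circ\Psi$ are shown to be the identity, and both verifications again hinge on using the one fixed $f_d$ consistently in $\Phi$, $\Psi$ and $\natural$. In short: the skeleton is right, but without importing the explicit $f_d$ and its involution property at these three steps, none of them closes.
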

As an application, we will prove an existence theorem on $\Pi_{d}^{6}$:
\begin{thm}
For any $M\in \Pi_{d}^{6}$, there exists a free involution on $M$.
\end{thm}
For the involution questions on homotopy projective 3-spaces and $d$-twisted homology $\mc P^3$, we refer to \cite{LL}, \cite{Masu}.

The organization of this paper is as follows. In section 2, we will discuss the basic properties of the element in $\Pi_{6}^{d}$. In section 3, the group structure of $\Pi_{6}^{d}$ will be concerned and Theorem 1.1 will be proved. Finally, we will prove theorem 1.2 in section 4.

\subsection{Notations}
$D^n:=\{x\in \mr^n \ |\ |x|\leqslant 1\}$.\\
$D_{+}^n:=\{(x_1,\cdot\cdot\cdot,x_n)\in D^n \ |\ x_n\geqslant 0\}$.\\
$D_{-}^n:=\{(x_1,\cdot\cdot\cdot,x_n)\in D^n \ |\ x_n\leqslant 0\}$.\\
$E^n:= (D^n)^{\circ}=\{x\in \mr^n \ |\ |x|< 1\}$.\\
$S^{n-1}:=\{x\in \mr^n \ |\ |x|=1\}$.\\

\section{$d$-twisted homology $\mc P^3$}
\subsection{Definition of $d$-twisted homology $\mc P^3$}
\begin{defn} Let $M$ be a closed smooth 1-connected 6-manifold. We call $M$ a $d$-twisted homology $\mc P^3$ if the cohomology ring $H^* (M,\mz)$ of $M$ is isomorphic to:
\[
\mz [x,y]/(x^2-dy, y^2), \ d\in \mz, \ degx=2, \ degy=4.
\]
Since $\mz [x,y]/(x^2-dy, y^2)\cong \mz [x,y]/(x^2+dy, y^2)$, we can always assume $d\geqslant 0.$
\end{defn}

\begin{exam} The first example is 3-dimensional complex projective space $\mc P^3$. It is simply connected and its cohomology ring is isomorphic to $\mz [x]/(x^4),\ degx=2$. By definition, $\mc P^3$ is 1-twisted. Furthermore, any smooth closed manifold $M$ which is homotopic equivalent to $\mc P^3$ is also a 1-twisted homology $\mc P^3$.
\end{exam}

\begin{exam}
$S^2\times S^4$ is a $0$-twisted homology $\mc P^3$.
\end{exam}
\begin{exam} Let $F_d$ be a smooth hypersuface in $\mc P^4$ of degree $d>0$, for example, the Fermat hypersurface: $\{[x_0,x_1,x_2,x_3,x_4]\in \mc P^4| \sum_{i=0}^4 x_i^d =0 \}$. By Lefschetz's hyperplane section theorem (c.f. \cite{MilnorM}) , the pair $(\mc P^4,F_d)$ is 3-connected and $F_d$ admits a connected sum decomposition (cf \cite{Wall} ):
\[
F_d \cong M_d \sharp \frac{b_3(F_d)}{2}S^3 \times S^3.
\]
By calculation, $H^*(M_d,\mz)\cong \mz [x,y]/(x^2-dy, y^2), degx=2, degy=4$. So for any $d>0$, there always exists a $d$-twisted homology $\mc P^3$.
\end{exam}
\begin{rem} In \cite{LW}, Libgober and Wood called a simply connected CW complex $X$ of dimension $6$ a $d$-twisted homology $\mc P^3$ if $H^* (X,\mz)\cong \mz [x,y]/(x^2-dy, y^2), \ d\in \mz, \ degx=2, \ degy=4.$ In our paper, we only concern with closed smooth 6-manifolds, so we make a little change of their definitions. In some references, these manifolds are also called fake projective 3-spaces.
\end{rem}

\subsection{A geometric construction of $d$ twisted homology $\mc P^3$}
Let $j_2:S^3 \hookrightarrow S^2\times S^3$ be the standard embedding. Let $p_1:S^2\times S^3\longrightarrow S^2$ and $p_2:  S^2 \times S^3 \longrightarrow S^3$ be the projection to the first and second factor of $S^2 \times S^3$. For an orientation reversing diffeomorphism $h: S^2 \times S^3 \longrightarrow S^2 \times S^3$ with\\
(1). The map $p_2 \circ h \circ j_2:S^3 \hookrightarrow S^2 \times S^3 \longrightarrow S^2 \times S^3
\longrightarrow S^3$ has degree $-1$.\\
(2). The Hopf invariant of $p_1\circ h \circ j_2:
S^3 \hookrightarrow S^2 \times S^3 \longrightarrow S^2 \times S^3
\longrightarrow S^3$ is $d$.\\
Then we can construct a closed 6-manifold $M(h)$ by gluing two copies of $S^2\times D^4$ along their boundaries through this orientation reversing diffeomorphism $h$:
\[
M(h):= (S^2 \times D^4)\cup_h (S^2\times D^4).
\]

\begin{lem}
$M(h)$ is a $d$-twisted homology $\mc P^3$.
\end{lem}
\begin{proof}
By Van-Kampen's theorem, $M(h)$ is simply connected. The Meyer-Vietoris exact sequence of $M(h)$ shows that the group structure of $H^*(M(h),\mz)=\oplus_{i=0}^{6}H^i (M(h),\mz)$ is isomorphic to $\mz\oplus\mz x\oplus \mz y \oplus \mz z$, with $degx=2, degy=4$, and $degz=6$. By Poincar\'{e} duality, we can choose $x\cup y$ as a generator of $\mz z$ and $H^*(M(h),\mz)\cong \mz\oplus\mz x\oplus \mz y \oplus \mz (x\cup y)$.

Next we need to find the relation between $x\cup x$ and $y$, which also determines the ring structure of $H^*(M(h),\mz)$. Since $S^2= E^2\cup pt$,
\[
M(h)=(S^2 \times D^4)\cup_h (S^2 \times D^4)= (E^2 \times E^4)\cup ((pt\times D^4)\cup_{hj_2}(S^2\times D^4 )).
\]
Therefore,
$$M(h)-E^6 \cong M(h)-E^2\times E^4\cong (pt\times D^4)\cup_{hj_2}(S^2\times D^4 )\simeq D^4\cup_{p_1 h j_2} S^2$$
and we get the ring isomorphism $H^*(M(h)-E^6,\mz)\cong H^*(D^4\cup_{p_1 h j_2} S^2,\mz)$. By the definition of Hopf invariant, the cohomology ring $H^*(D^4\cup_{p_1 h j_2} S^2,\mz)\cong \mz[x,y]/(x^2-dy,xy,y^2)$ with $degx=2, degy=4$, where $d$ is the Hopf invariant of the map $p_1 h j_2$.

On the other hand, the restriction map $H^m(M(h),\mz)\longrightarrow H^m (M(h)-E^6,\mz)$ is isomorphic when $m\neq 6$ and we get the relation $x\cup x= dy$ in $H^*(M(h),\mz)$. Finally, combine the group structure of $H^*(M(h),\mz)$ and the relation $x\cup x =d y$, we see the cohomology ring $H^*(M(h),\mz)\cong \mz[x,y]/(x^2-dy,y^2), degx=2, degy=4$.
\end{proof}

What about the converse? First, following \cite{MY}, let $M$ be a $d$-twisted homology $\mc P^3$ and $i:S^2\hookrightarrow M$ be an embedding, we call $i:S^2\hookrightarrow M$ a \textbf{primary embedding} if $i_*:H_2(S^2,\mz)\longrightarrow H_2(M,\mz)$ maps the generator $[S^2]\in H_2(S^2,\mz)$ to the generator of $H_2(M,\mz)$ which represents the orientation of $M$. 

\begin{exam}
For $M(h)=(S^2 \times D^4)\cup_h (S^2\times D^4)$ and each copy $S^2\times D^4$, the inclusion $S^2\hookrightarrow S^2\times D^4\subset M(h)$ induces isomorphism $H_2(S^2,\mz)\cong H_2(S^2\times D^4,\mz)\cong H_2(M(h),\mz)$. Since the attaching map $h$ is orientation-reversing, we can make these two copies $S^2\times D^4\subset M(h)$ preserve the orientation. Then these two inclusions $S^2\hookrightarrow S^2\times D^4\subset M(h)$ are primary embeddings.
\end{exam}
\begin{lem}
Let $M$ be a $d$-twisted homology $\mc P^3$ and $i:S^2 \longrightarrow M$ be a primary embedding. The normal bundle $\eta_i$ of $i$ is trivial if and only if $M$ is spin, i.e. $w_2 (M)=0$.
\end{lem}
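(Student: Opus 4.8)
The plan is to reduce \emph{both} conditions to the vanishing of a single mod-$2$ class, namely $w_2(\eta_i)\in H^2(S^2,\mz/2)$, and then to identify this class with $i^*w_2(M)$ via the isomorphism induced by the primary embedding. Since $M$ is $1$-connected, $w_1(M)=0$, so ``$M$ spin'' is literally the condition $w_2(M)=0$; thus it suffices to prove $\eta_i$ trivial $\iff w_2(M)=0$, and I would do this through a chain of equivalences.

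First I would analyze the bundle $\eta_i$ directly. It is an oriented rank-$4$ vector bundle over $S^2$: orientability follows because $M$ and $S^2$ are oriented and $i^*TM\cong TS^2\oplus\eta_i$, so $\eta_i$ inherits an orientation (equivalently $w_1(\eta_i)=i^*w_1(M)+w_1(TS^2)=0$). Oriented rank-$4$ bundles over $S^2$ are classified by $\pi_1(SO(4))\cong\mz/2$, and the nontrivial class is detected by the second Stiefel--Whitney class; hence $\eta_i$ is trivial if and only if $w_2(\eta_i)=0$. I expect this classification to be the one genuinely substantive input of the argument, the remaining steps being formal manipulations of characteristic classes.

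Next I would exploit the tangent bundle splitting $i^*TM\cong TS^2\oplus\eta_i$ together with the Whitney sum formula. Because $S^2$ is orientable we have $w_1(TS^2)=0$, and because $\chi(S^2)=2$ is even we have $w_2(TS^2)=0$, so $w(TS^2)=1$. Therefore $w(\eta_i)=i^*w(TM)$, and in particular $w_2(\eta_i)=i^*w_2(M)$.

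Finally I would show that $i^*\colon H^2(M,\mz/2)\to H^2(S^2,\mz/2)$ is an isomorphism. Both groups are $\mz/2$, and since $i$ is a primary embedding $i_*[S^2]$ generates $H_2(M,\mz)$, so the Kronecker pairing $\la i^*x,[S^2]\ra=\la x, i_*[S^2]\ra$ is odd, forcing $i^*x$ to be the nonzero class mod $2$; thus $i^*$ is an isomorphism and $i^*w_2(M)=0$ if and only if $w_2(M)=0$. Chaining the three equivalences gives $\eta_i\text{ trivial}\iff w_2(\eta_i)=0\iff i^*w_2(M)=0\iff w_2(M)=0\iff M\text{ spin}$, which is the claim.
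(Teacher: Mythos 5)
Your proof is correct and follows essentially the same route as the paper's: both reduce triviality of $\eta_i$ to the vanishing of $w_2(\eta_i)\in H^2(S^2,\mz/2\mz)$ (the paper via the obstruction $\sigma_2\in H^2(S^2,\pi_1(SO(4)))$, you via the equivalent clutching-function classification of oriented rank-$4$ bundles over $S^2$), then identify $w_2(\eta_i)$ with $i^*w_2(M)$ and use that a primary embedding makes $i^*$ an isomorphism on $H^2(\cdot,\mz/2\mz)$. You merely fill in two details the paper leaves implicit, namely the Whitney-sum computation $w(\eta_i)=i^*w(TM)$ and the Kronecker-pairing argument for the isomorphism.
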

\begin{proof}
The normal bundle bundle $\eta_i$ is trivial if and only if the second obstruction $\sigma_2 \in H^2(S^2, \pi_1 (SO(4)/SO(1)))=H^2(S^2,\pi_1(SO(4)))=H^2(S^2,\mz/2\mz)$ is zero. From \cite{Milnor}, this obstruction $\sigma_2$ is equal to the second Stiefel-Whitney class $w_2(\eta_i)$, which is also equal to $i^* w_2 (M)$.

Since $i$ is a primary embedding and $\pi_1(M)=1$, the restriction map $i^*: H^2(M.\mz/2\mz)\longrightarrow H^2(S^2,\mz/2\mz)$ is an isomorphism. So $\eta_i$ is trivial if and only if $w_2(M)=0$. 
\end{proof}
\begin{coro}
$M(h)$ is spin.
\end{coro}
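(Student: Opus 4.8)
The plan is to reduce everything to the preceding Lemma, which states that a $d$-twisted homology $\mc P^3$ is spin if and only if the normal bundle $\eta_i$ of a primary embedding $i:S^2\hookrightarrow M$ is trivial. Since $M(h)$ is a $d$-twisted homology $\mc P^3$ by Lemma~2.5, it therefore suffices to produce a single primary embedding of $S^2$ into $M(h)$ and check that its normal bundle is trivial.

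First I would take the embedding already supplied by the preceding Example, namely the inclusion of one of the two glued copies, $i:S^2=S^2\times\{0\}\hookrightarrow S^2\times D^4\subset M(h)$. That Example has already verified that this inclusion induces an isomorphism $H_2(S^2,\mz)\cong H_2(M(h),\mz)$ compatible with the orientation, so $i$ is indeed primary and the hypotheses of the Lemma are met.

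Next I would identify the normal bundle. Because $S^2\times D^4$ is a product, the normal bundle of $S^2\times\{0\}$ inside $S^2\times D^4$ is the restriction of the second factor, i.e.\ the trivial bundle $S^2\times\mr^4$. As $S^2\times D^4$ sits in $M(h)$ as a codimension-zero submanifold (one of the two pieces of the gluing), the normal bundle $\eta_i$ of $i$ in $M(h)$ agrees with this normal bundle computed inside $S^2\times D^4$, and hence is trivial.

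Finally, feeding this primary embedding with trivial $\eta_i$ into the preceding Lemma gives $w_2(M(h))=0$, that is, $M(h)$ is spin. I do not expect a genuine obstacle here: the only two points needing a word of justification are that the Example's embedding is truly primary (already established) and that the normal bundle is unchanged when passing from the product piece $S^2\times D^4$ to the whole manifold $M(h)$, which is immediate from the codimension-zero gluing. The result is thus essentially a direct corollary of the two preceding lemmas together with the product structure of each copy of $S^2\times D^4$.
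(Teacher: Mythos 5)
Your proposal is correct and follows exactly the paper's own route: the paper likewise takes the primary embedding $S^2\hookrightarrow S^2\times D^4\subset M(h)$ from one of the glued product pieces, observes its normal bundle is trivial by the product structure, and concludes via Lemma~2.8 that $M(h)$ is spin. You have simply spelled out the steps (the appeal to Lemma~2.5, Example~2.7, and the codimension-zero gluing) that the paper leaves implicit.
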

\begin{proof}
$M(h)=(S^2 \times D^4)\cup_h (S^2\times D^4)$, choose one primary embedding $S^2\hookrightarrow S^2\times D^4\subset M(h)$. The normal bundle of $S^2$ is trivial.
\end{proof}

\begin{prop}
Let $M$ be a $d$-twisted homology $\mc P^3$, then $M$ is spin if and only if
\[
M\cong (S^2 \times D^4)\cup_h (S^2\times D^4),
\]
where $h: S^2 \times S^3 \longrightarrow S^2 \times S^3$ is an orientation reversing diffeomorphism with
(1). $p_2 \circ h \circ j_2$ has degree $-1$, (2). The Hopf invariant of $p_1\circ h \circ j_2$ is $d$.
\end{prop}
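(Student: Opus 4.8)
The statement is an equivalence, and the backward implication is already in hand: if $M\cong (S^2\times D^4)\cup_h(S^2\times D^4)$ with $h$ satisfying (1) and (2), then $M=M(h)$ is a $d$-twisted homology $\mc P^3$ by Lemma 2.6, and a primary embedding $S^2\hookrightarrow S^2\times D^4\subset M(h)$ has trivial normal bundle, so $M$ is spin by Lemma 2.8 (this is exactly Corollary 2.9). Hence the whole content of the proposition is the forward direction, and the plan is to start from a spin $d$-twisted homology $\mc P^3$ and manufacture the two-handlebody decomposition.

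First I would produce one copy of $S^2\times D^4$ inside $M$. Since $M$ is $1$-connected, Hurewicz gives $\pi_2(M)\cong H_2(M)\cong\mz$, and because the metastable inequality $2\cdot 6>3(2+1)$ holds, Haefliger's embedding theorem lets me represent the generator realizing the orientation of $M$ by an embedded primary sphere $i:S^2\hookrightarrow M$. By Lemma 2.8 the spin hypothesis forces the normal bundle $\eta_i$ to be trivial, so a closed tubular neighborhood is $N\cong S^2\times D^4$ with $\partial N\cong S^2\times S^3$. Set $W:=M\setminus\mathrm{int}\,N$. All the remaining difficulty is concentrated in showing $W\cong S^2\times D^4$.

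For $W$ I would first pin down the homotopy type and then the diffeomorphism type. Van Kampen's theorem gives $\pi_1(W)=\pi_1(M)=1$, since the core $S^2$ has codimension $4$. Combining the long exact sequence of the pair $(M,N)$ with $N\simeq S^2$, the excision isomorphism $H_*(M,N)\cong H_*(W,\partial W)$, and Lefschetz duality $H_k(W,\partial W)\cong H^{6-k}(W)$, I would show $H^*(W)\cong H^*(S^2)$; thus $W$ is a $1$-connected spin $6$-manifold with the integral homology of $S^2$ and boundary $S^2\times S^3$. Now represent the generator of $H_2(W)\cong\mz$ by an embedded sphere in $\mathrm{int}\,W$; the same normal-bundle computation as in Lemma 2.8 (using that $W$ is spin) makes its normal bundle trivial, yielding a second tube $N'\cong S^2\times D^4$. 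The key step is to prove that $V:=W\setminus\mathrm{int}\,N'$ is an $h$-cobordism from $\partial N'$ to $\partial W$. Because the core of $N'$ carries the generator of $H_2(W)$, the inclusion $N'\hookrightarrow W$ is a homology isomorphism, so $H_*(V,\partial N')\cong H_*(W,N')=0$ by excision; Lefschetz duality for the cobordism $V$, $H_k(V,\partial W)\cong H^{6-k}(V,\partial N')$, then forces $H_*(V,\partial W)=0$ as well. Since $V$, $\partial N'$ and $\partial W$ are all simply connected, both boundary inclusions are homotopy equivalences, so $V$ is an $h$-cobordism. As $\dim V=6$, Smale's $h$-cobordism theorem gives $V\cong (S^2\times S^3)\times[0,1]$, and gluing this collar back yields $W\cong N'\cong S^2\times D^4$.

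Finally I would assemble the decomposition and verify the gluing data. Writing $M=N\cup_\partial W$ and transporting the two trivializations $N\cong S^2\times D^4$ and $W\cong S^2\times D^4$ to the common boundary produces an attaching diffeomorphism $h:S^2\times S^3\to S^2\times S^3$ with $M\cong M(h)$. Orientability of $M$ forces $h$ to reverse orientation, and this in turn pins the degree of $p_2\circ h\circ j_2$ to $-1$ (this is precisely the condition making the two copies of $H_3(S^2\times S^3)$ cancel so that $H_4(M)\cong\mz$), giving (1). Running the ring computation of Lemma 2.6 in reverse, the relation $x\cup x=dy$ in $H^*(M,\mz)$ identifies the Hopf invariant of $p_1\circ h\circ j_2$ with $d$, giving (2). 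I expect the single hardest point to be the identification $W\cong S^2\times D^4$, namely checking that the complementary cobordism $V$ is genuinely an $h$-cobordism so that the $h$-cobordism theorem applies; the homological bookkeeping (signs in the connecting homomorphisms, and extracting the degree and Hopf-invariant conditions on $h$) is a secondary technical nuisance rather than a real obstacle.
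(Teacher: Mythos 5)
Your forward-direction construction coincides with the paper's: represent the preferred generator of $H_2(M,\mz)$ by an embedded primary sphere, use Lemma 2.8 to get a tube $N\cong S^2\times D^4$, and identify the complement $W$ with $S^2\times D^4$; your detailed $h$-cobordism argument (showing $V=W\setminus \mathrm{int}\,N'$ is a simply connected homologically trivial cobordism and invoking Smale) is a correct expansion of the step the paper compresses into the phrase ``by the standard technique of $h$-cobordism theorem.''

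However, your derivation of condition (1) has a genuine error. You claim that orientation-reversal of $h$ alone pins $\deg(p_2\circ h\circ j_2)$ to $-1$, and you justify this by saying it is ``precisely the condition making the two copies of $H_3(S^2\times S^3)$ cancel so that $H_4(M)\cong\mz$.'' Both halves of this are false. Mayer--Vietoris gives $H_4(M(h))\cong H_3(S^2\times S^3)\cong\mz$ for \emph{every} gluing $h$, since $H_3$ and $H_4$ of both pieces $S^2\times D^4$ vanish; the homology of $M(h)$ cannot see the degree. And if $\epsilon_k=\pm 1$ denotes the degree of $h_*$ on $H_k(S^2\times S^3,\mz)$ for $k=2,3$, orientation-reversal only forces $\epsilon_2\epsilon_3=-1$, not $\epsilon_3=-1$. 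Concretely, $h=r\times \mathrm{id}_{S^3}$ with $r$ a reflection of $S^2$ is orientation-reversing, has $(\epsilon_2,\epsilon_3)=(-1,+1)$, so $\deg(p_2\circ h\circ j_2)=+1$, and glues up to $S^2\times S^4$; your argument does not exclude this case. The missing input --- exactly what the paper uses --- is that \emph{both} cores of the decomposition are primary embeddings, i.e.\ represent the same orientation-preferred generator of $H_2(M,\mz)$. In your setup this means choosing the sphere in $W$ to represent the generator of $H_2(W)\cong H_2(M)$ matching that of the first core (and choosing the trivialization $N'\cong S^2\times D^4$ compatibly). Since the two inclusions $S^2\times D^4\hookrightarrow M$ then induce the same isomorphism on $H_2$, one gets $h_*=\mathrm{id}$ on $H_2(S^2\times S^3,\mz)$, i.e.\ $\epsilon_2=+1$, and only then does orientation-reversal force $\epsilon_3=-1$, which is condition (1). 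Your treatment of condition (2) via Lemma 2.6 is fine.
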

\begin{proof}
Only one direction needs to be proved. Assume $M$ is spin. According to Whitney's embedding theorem, we can represent the generator of $H_2(M,\mz)$ which represents the orientation class by a primary embedding $i:S^2 \hookrightarrow M$ and any two such primary embeddings are isotopic. From Lemma 2.8, the normal bundle of $i:S^2\hookrightarrow M$ is trivial and we have an embedding $f:S^2\times D^4\longrightarrow M$ with $f(x,0)=i$.

Consider the complement $M-f(S^2\times E^4)\simeq M-i(S^2)$, $\pi_1(M-S^2)=1$ and $H_*(M-i(S^2),\mz)\cong H_*(S^2,\mz)$, which implies $M-i(S^2)\simeq S^2$. Since $\pi_2(M-i(S^2))\cong \pi_2(M)$, we can repersent the generator of $\pi_2(M-i(S^2))\cong \pi_2(M)$ by a primary embedding $j:S^2\hookrightarrow M-f(S^2\times E^4)\subset M$ and $j:S^2\hookrightarrow M-f(S^2\times E^4)$ is also a homotopy equivalence by Whitehead theorem. By the standard technique of $h$-cobordism theorem, we see $M-f(S^2\times E^4)$ is diffeomorphic to the normal bundle of $j:S^2 \hookrightarrow M$, which is a trivial bundle, and we obtain a diffeomorphsim $g: S^2\times D^4\cong M-f(S^2\times E^4)$ with $g(x,0)=j$.

Finally, we get $M\cong f(S^2\times D^4)\cup_{f(S^2\times S^3)} g(S^2\times D^4)\cong (S^2\times D^4)\cup_h (S^2\times D^4)$, where $h=g^{-1}f:S^2 \times S^3 \longrightarrow S^2\times S^3$ is the attaching diffeomorphsim. Furthermore, the embeddings $f$ and $g$ are orientation preserving and the two copies of $S^2\times D^4$ of $(S^2\times D^4)\cup_h (S^2\times D^4)$ are also orientation preserving, the attaching map $h$ has to be orientation reversing.

For the degree of $p_2 h j_2$, we see the primary embeddings $i=f(x,0)$ and $j=g(x,0)$ are isotopic to each other, which implies $h_*:H_2(S^2\times S^3,\mz)\longrightarrow H_2(S^2\times S^3,\mz)$ is equal to identity. Then on $H_3(S^2\times S^3,\mz)$, $h=-id$ because $h$ is orientation reversing and we get the degree of $p_2hj_2$ is $-1$. By lemma 2.6, the Hopf invariant of $p_1hj_2$ is $d$.
\end{proof}

\begin{prop}
If $d$ is odd, any $d$ twisted homology $\mc P^3$ is spin.
\end{prop}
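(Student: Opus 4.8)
The plan is to show directly that $w_2(M)\in H^2(M,\mz/2)$ vanishes by computing it through Wu's formula rather than through the geometric splitting of Proposition 2.10 (which characterizes the spin manifolds and so cannot be invoked here without circularity). Since $M$ is $1$-connected it is orientable, so $v_1=w_1=0$, and Wu's formula $w=\mathrm{Sq}(v)$ gives $w_2=v_2+\mathrm{Sq}^1 v_1=v_2$, where $v_2\in H^2(M,\mz/2)$ is the second Wu class, characterized by $\langle v_2\cup u,[M]\rangle=\langle \mathrm{Sq}^2 u,[M]\rangle$ for every $u\in H^4(M,\mz/2)$. Thus everything reduces to evaluating $\mathrm{Sq}^2$ on a generator of $H^4(M,\mz/2)$.

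First I would record the mod $2$ ring. Reducing $\mz[x,y]/(x^2-dy,y^2)$ modulo $2$ gives $H^2(M,\mz/2)=\mz/2\langle\bar x\rangle$, $H^4(M,\mz/2)=\mz/2\langle\bar y\rangle$, and $H^6(M,\mz/2)=\mz/2\langle\bar x\bar y\rangle$; here $\bar x\bar y$ is a generator because $x\cup y$ generates $H^6(M,\mz)\cong\mz$ by Poincar\'e duality, so in particular $\langle\bar x\bar y,[M]\rangle=1$. The hypothesis that $d$ is odd now enters: the relation $x^2=dy$ reduces to $\bar x^2=\bar y$. I would then compute $\mathrm{Sq}^2\bar y=\mathrm{Sq}^2(\bar x^2)$ by the Cartan formula, using $\mathrm{Sq}^2\bar x=\bar x^2$ and $\mathrm{Sq}^1\bar x=0$ (the mod $2$ Bockstein of $\bar x$ vanishes because $\bar x$ is the reduction of the integral class $x$). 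This gives $\mathrm{Sq}^2(\bar x^2)=2\,\bar x^3=0$.

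It then follows that $\langle v_2\cup\bar y,[M]\rangle=\langle \mathrm{Sq}^2\bar y,[M]\rangle=0$. Writing $v_2=a\,\bar x$ with $a\in\mz/2$, we have $v_2\cup\bar y=a\,\bar x\bar y$, so $\langle v_2\cup\bar y,[M]\rangle=a$; hence $a=0$, $v_2=0$, and $w_2(M)=0$, i.e. $M$ is spin. I do not expect a genuine obstacle here beyond bookkeeping: the two points that require care are the reduction $w_2=v_2$ (valid since $M$ is orientable, so that the Wu class computation applies) and the identity $\mathrm{Sq}^1\bar x=0$, which uses the integrality of $x$. The parity hypothesis is used in exactly one place, to get $\bar y=\bar x^2$ and thereby force $\mathrm{Sq}^2\bar y=0$; for even $d$ this computation no longer forces $v_2$ to vanish, so a different argument would be needed in that range.
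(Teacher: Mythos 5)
Your proof is correct and follows essentially the same route as the paper: both use the fact that $d$ odd forces $\bar y=\bar x^2$ in $H^*(M,\mz/2\mz)$, compute $\mathrm{Sq}^2$ on $H^4$ via the Cartan formula to conclude the relevant Wu class vanishes, and then apply $w=\mathrm{Sq}(v)$ to get $w_2(M)=0$. Your write-up is in fact slightly more careful than the paper's, since you explicitly dispose of the cross term $(\mathrm{Sq}^1\bar x)^2$ in the Cartan expansion, which the paper silently omits.
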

\begin{proof}
When $d$ is odd, $H^*(M,\mz/2\mz)\cong \mz/2\mz [x]/(x^4)$ with $degx=2$ and the homomorphism $Sq^i:H^{6-i}(M,\mz/2\mz)\longrightarrow H^6(M,\mz/2\mz)$ is zero. Indeed, $Sq^1,\ Sq^3=0$ because $H^{odd}(M,\mz/2\mz)=0$ and for the generator $x^2\in H^4(M,\mz/2\mz)$, $Sq^2(x^2)=2x^3=0$. $Sq^*=0$ implies the total Wu class $v(M)$ of $M$ is equal to 1 and the total Stiefel-Whitney class $w(M)=Sq(v(M))=1$.
\end{proof}

\section{Group structure of $\Pi_{d}^{6}$}
\subsection{The clutching diffeomorphism $f_d$}
Following \cite{MY}, let $\mr^4$ be the quaternion field and there exists a differential $S^1$ action on $\mr^4$ by left multiplication. Under this action, there is a free differential $S^1$ action on $S^3\subset \mr^4$ whose quotient space $S^3/S^1$ is just $S^2$. For convenience, we identify $S^2$ by the quotient space $S^3/S^1:=\{S^1x| x\in S^3\subset \mr^4\}$.

We define an orientation-reversing diffeomorphism of $S^2\times S^3$ by:
\[
f:S^2\times S^3\longrightarrow S^2\times S^3
\]
\[
(S^1x,y)\mapsto (S^1xy, y^{-1}),
\]
here $y^{-1}$ is the inverse of $y$ in the quaternion field $\mr^4$. From \cite{MY}, we know $f\circ f=id$; the degree of the composition of $p_2\circ f\circ j_2$ is $-1$; and the Hopf invariant of $p_1\circ f\circ j_2$ is 1.

Let $L:\mr^4\longrightarrow \mr^4$ be a reflection along the hyperplane $\{(x_1,x_2,x_3,x_4)\in \mr^4\ |\ x_4=0\}$ and $L|_{S^3}$ induces an involution of $S^3$ with degree $-1$, i.e. $L|_{S^3}\circ L|_{S^3}=id$ and $deg L|_{S^3}=-1$. Define an involution $l:S^2\times S^3\longrightarrow S^2\times S^3$ by $l(x,y)=(x,Ly)$.

Then we can define our clutching maps $f_d,\ d\geqslant 0$. Let $f_0:=l$, and for $d>0,$ define:
\[
f_d:= f\circ l \circ f_{d-1}.
\]
Since $l\circ l= id$ and $f\circ f=id$, we see $f_1=f$ and $f_d\circ f_d=id$.
Furthermore, by the properties of $f$ and the construction of $f_d$ , these $f_d$ admit special properties:
\begin{lem} $f_d$ is an orientation-reversing diffeomorphism with:
(1). $deg(p_2\circ f_d\circ j_2)=-1$.\\
(2). The Hopf invariant of $p_1 \circ f_d \circ j_2$ is equal to $d$.
\end{lem}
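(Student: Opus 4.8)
The plan is to argue by induction on $d$, verifying the three assertions in the order orientation-reversing, then (1), then (2), with the base case $f_0 = l$ handled directly. Since $f_d = f\circ l\circ f_{d-1}$, each inductive step composes with the two model maps $f$ and $l$, and I can read off their effect on orientation, on $H_3(S^2\times S^3)$, and on $\pi_3(S^2)$ from the data already recorded for $f$ together with $\deg(L|_{S^3}) = -1$. The orientation and degree claims are routine; the Hopf-invariant claim is the real content.

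For orientation and for claim (1): I first note that $l = \mathrm{id}_{S^2}\times(L|_{S^3})$ reverses orientation (it reverses exactly the $S^3$-factor), while $f$ is orientation-reversing by construction, so if $f_{d-1}$ is orientation-reversing then the triple composite $f\circ l\circ f_{d-1}$ is again orientation-reversing. For the degree I use that $H_3(S^2\times S^3)\cong\mz$ is generated by $[\mathrm{pt}\times S^3] = (j_2)_*[S^3]$, and that for any self-map $\phi$ the integer $\deg(p_2\circ\phi\circ j_2)$ equals the multiplier of $\phi_*$ on $H_3$, since $(p_2)_*$ sends this generator to $[S^3]$. From $\deg(p_2\circ f\circ j_2) = -1$ and $\deg(p_2\circ l\circ j_2) = \deg(L|_{S^3}) = -1$ I get $f_*|_{H_3} = l_*|_{H_3} = -1$, hence $(f_d)_*|_{H_3} = (-1)(-1)\,(f_{d-1})_*|_{H_3}$, which is constant in $d$ and equal to $-1$ by the base case $f_0 = l$. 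This proves (1) for all $d$.

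For claim (2), write $(f_d\circ j_2)(y) = (a_d(y), b_d(y))$ with $a_d = p_1\circ f_d\circ j_2: S^3\to S^2$ and $b_d = p_2\circ f_d\circ j_2: S^3\to S^3$, so $\deg b_d = -1$ by (1) and $a_d$ is exactly the map whose Hopf invariant I must compute. Feeding $(a_{d-1}, b_{d-1})$ through $l$ and then through $f(S^1 x, y) = (S^1 xy, y^{-1})$ yields the recursions $a_d(y) = a_{d-1}(y)\cdot Lb_{d-1}(y)$ and $b_d(y) = (Lb_{d-1}(y))^{-1}$, where $a_{d-1}(y)\cdot Lb_{d-1}(y)$ denotes the coset $a_{d-1}(y)\in S^2 = S^3/S^1$ right-multiplied by the unit quaternion $Lb_{d-1}(y)$; this is well defined and is precisely the value $\mu(a_{d-1}(y), Lb_{d-1}(y))$ of the right action $\mu: S^2\times S^3\to S^2$, $\mu(S^1 x, r) = S^1 xr$. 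Since $\pi_1(S^2) = 1$, the Hopf invariant identifies $\pi_3(S^2)$ with $\mz$, so it suffices to track the class $[a_d]\in\pi_3(S^2)$.

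The key step is to split this pointwise product in homotopy. Because $a_d = \mu\circ(a_{d-1}\times Lb_{d-1})\circ\Delta$ and $S^3$ is a suspension, its reduced diagonal is null-homotopic, so $\Delta$ factors up to homotopy through the comultiplication $S^3\to S^3\vee S^3$; this is meant to yield the additive formula $[a_d] = [a_{d-1}] + [\pi\circ Lb_{d-1}]$ in $\pi_3(S^2)$, where $\pi(y) = S^1 y$ is the Hopf map and the two summands come from $y\mapsto\mu(a_{d-1}(y), 1) = a_{d-1}(y)$ and $y\mapsto\mu(S^1, Lb_{d-1}(y)) = \pi(Lb_{d-1}(y))$. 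Since post-composition with $\pi$ is a homomorphism $\pi_3(S^3)\to\pi_3(S^2)$, I then get $[\pi\circ Lb_{d-1}] = \deg(L\circ b_{d-1})\cdot[\pi] = (-1)(-1)[\pi] = [\pi]$, which has Hopf invariant $1$; hence the Hopf invariant of $a_d$ is that of $a_{d-1}$ plus $1$, and with $a_0 = p_1\circ l\circ j_2$ constant (Hopf invariant $0$) I conclude it equals $d$. I expect the main obstacle to be making this suspension-splitting rigorous — the basepoint bookkeeping needed so that the null-homotopy of the reduced diagonal really delivers the additive formula for $[a_d]$ — after which the degree count from (1) and the base case finish the induction.
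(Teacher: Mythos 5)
Your argument is correct, and its first two parts (orientation, and the degree claim via multiplicativity on $H_3$ plus induction from $f_0=l$) coincide with the paper's; but on the Hopf invariant, which is the real content, you take a genuinely different route. The paper never projects off the product: it treats $f_d\circ j_2$ as a single class in $\pi_3(S^2\times S^3)=\mz a\times\mz b$ (with $a,b$ generating $\pi_3(S^2),\pi_3(S^3)$), records once that $\pi_3(f)(a)=a$, $\pi_3(f)(b)=a-b$, $\pi_3(l)(a)=a$, $\pi_3(l)(b)=-b$, and inducts to $\pi_3(f_d)(b)=da-b$; the additivity you labor over is free there, being nothing but the fact that induced maps on $\pi_3$ are homomorphisms. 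You instead pass to the $S^2$-coordinate first, so that $a_d$ becomes the pointwise action-product $\mu\circ(a_{d-1}\times Lb_{d-1})\circ\Delta$, and then you must re-split it; that is where your suspension/co-H argument enters, and it is exactly the step you flag as the obstacle. It can be made rigorous, but not quite by the phrasing you give (a null reduced diagonal plus the cofibration $S^3\vee S^3\to S^3\times S^3\to S^3\wedge S^3$ does not by itself yield a factorization, since cofibre sequences are not exact against maps out of $S^3$ in that variable); the clean justification is that the pair $(S^3\times S^3,S^3\vee S^3)$ is $5$-connected, so $\pi_3(S^3\vee S^3)\to\pi_3(S^3\times S^3)$ is an isomorphism. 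Better still, you can bypass the wedge entirely: in $\pi_3(S^3\times S^3)\cong\pi_3(S^3)\oplus\pi_3(S^3)$ one has $[\Delta]=[j_1]+[j_2]$ for trivial reasons, and applying the homomorphism $\left(\mu\circ(a_{d-1}\times Lb_{d-1})\right)_*$ gives $[a_d]=[a_{d-1}]+[\pi\circ Lb_{d-1}]=[a_{d-1}]+[\pi]$ at once, the basepoint discrepancies being harmless because the translations $\mu(\cdot\,,q)$ and left multiplications by a unit quaternion are homotopic to the identity ($S^3$ is connected). Note that once the additivity is routed through $\pi_3$ of a product in this way, your proof essentially collapses into the paper's: what your version buys is the explicit geometric recursion $a_d=a_{d-1}\cdot Lb_{d-1}$, $b_d=(Lb_{d-1})^{-1}$, which makes visible where the $+1$ per step comes from; what the paper's version buys is brevity and freedom from basepoint and splitting issues, since functoriality of $\pi_3$ does all the work.
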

\begin{proof}
First, $l$ and $f$ are orientation-reversing and $f_d=f\circ l\circ f_{d-1}$, $f_d$ is also orientation reversing.

Second, $H_3(S^2\times S^3,\mz)=\mz$ and we get $deg(p_2\circ f_d\circ j_2)=deg(p_2\circ f\circ j_2)\cdot deg(p_2\circ l\circ j_2)\cdot deg(p_2\circ f_{d-1}\circ j_2)=deg(p_2\circ f_{d-1}\circ j_2)=deg(p_2\circ f\circ j_2)=-1$.

Finally, $\pi_3(S^2\times S^3)=\mz a \times \mz b$, where $a$ and $b$ are the generators of $\pi_3(S^2)$ and $\pi_3(S^3)$. For $f:S^2\times S^3\rightarrow S^2\times S^3,$ we know $\pi_3(f)(a)=\pi_3(l)(a)=a$, $\pi_3(l)(b)=-b$ and $\pi_3(f)(b)=a-b$. So we obtain $\pi_3 (f_d)(b)=da-b$ and the Hopf invariant of $p_1 \circ f_d \circ j_2$ is just equal to $d$, which is the value of $\pi_3(p_2 \circ f_d)(b)=da$.
\end{proof}
\subsection{The group structure of $\Pi_{d}^{6}$}
Denote $\Pi_{d}^{6}$ by the set of all diffeomorphism classes of spin $d$-twisted homology $\mc P^3$. We assume $d\geqslant 0$ because $\Pi_{d}^{6}=\Pi_{-d}^{6}$.

Let $M_1$ and $M_2$ be two elements in $\Pi_{d}^{6}$. Since $M_1$ and $M_2$ are spin, we can choose two embeddings $h_1:S^2 \times D^4 \subset M_1$ and $h_2:S^2 \times D^4 \subset M_2$ such that $h_1(x,0):S^2\hookrightarrow M_1$ and $h_2(x,0):S^2\hookrightarrow M_2$ are primary embedings of $M_1$ and $M_2$. We define the operation $\natural$ by:
\[
M\natural N:= (M_1-h_1(S^2\times E^4))\cup_{h_2 f_dh_1^{-1}}(M_2- h_2 (S^2\times E^4) ).
\]
Since any primary embeddings of $M_i$ are isotopic as well as their normal bundles, the diffeomorphism class of $M_1 \natural M_2$ is invariant under the variant choices of primary embeddings and their normal bundles. So the operation $\natural$ is well-defined. Furthermore, we have:
\begin{prop}
$M_1 \natural M_2\in \Pi_{d}^{6}.$
\end{prop}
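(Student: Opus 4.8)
The plan is to show that $M_1 \natural M_2$ is again of the standard form $(S^2\times D^4)\cup_H (S^2\times D^4)$ for a single orientation-reversing gluing diffeomorphism $H$ satisfying the two numerical conditions of Lemma 2.6, and then to invoke Lemma 2.6 together with Corollary 2.9 to conclude that $M_1\natural M_2$ is a spin $d$-twisted homology $\mc P^3$.

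First I would analyze the two pieces being glued. Write $X_i := M_i - h_i(S^2\times E^4)$. The argument in the proof of Proposition 2.10 shows that the complement of an open tubular neighborhood of a primary embedding in a spin $d$-twisted homology $\mc P^3$ is diffeomorphic to $S^2\times D^4$ (the complement is a tubular neighborhood of a second primary embedding, whose normal bundle is trivial by Lemma 2.8). Hence there are diffeomorphisms $g_i: S^2\times D^4 \xrightarrow{\cong} X_i$. Setting $h_i' := g_i^{-1} h_i : S^2\times S^3 \to S^2\times S^3$, one recovers the standard decomposition $M_i \cong (S^2\times D^4)\cup_{h_i'}(S^2\times D^4)$, so by Proposition 2.10 each $h_i'$ is orientation-reversing with $\deg(p_2 h_i' j_2)=-1$ and Hopf invariant of $p_1 h_i' j_2$ equal to $d$. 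Transporting the gluing map $h_2 f_d h_1^{-1}$ through $g_1,g_2$ (using $h_1^{-1}g_1=(h_1')^{-1}$ and $g_2^{-1}h_2=h_2'$) then yields
\[
M_1 \natural M_2 \cong (S^2\times D^4)\cup_H (S^2\times D^4), \qquad H := h_2'\, f_d\, (h_1')^{-1}.
\]

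It remains to check that $H$ meets the hypotheses of Lemma 2.6. Orientation-reversal is immediate, since $H$ is a composite of three orientation-reversing diffeomorphisms. For the degree condition I would compute on $H_3(S^2\times S^3)\cong\mz$: each of $h_2'$, $f_d$, $(h_1')^{-1}$ acts by $-1$ there (this is the content of $\deg(p_2(\cdot)j_2)=-1$ for the $h_i'$ and of Lemma 3.1 for $f_d$), so $H_*$ acts by $-1$ on $H_3$ and $\deg(p_2 H j_2)=-1$. The Hopf-invariant condition is the delicate point. Writing $\pi_3(S^2\times S^3)=\mz a\oplus \mz b$ as in Lemma 3.1, the numerical data fix the images of $b$, namely $\pi_3(h_i')(b)=da-b$ and $\pi_3(f_d)(b)=da-b$; to finish the matrix computation of $\pi_3(H)(b)$ I also need the values on $a$.

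The key point I would isolate is that $\pi_3(h_i')(a)=a$. Naturality of the Hurewicz map together with $\mathrm{Hur}(a)=0$ forces $\pi_3(h_i')(a)=\alpha_i a$ with $\alpha_i=\pm1$; to pin down $\alpha_i=1$, note that $a$ factors as the Hopf map $\eta$ followed by the inclusion $S^2=S^2\times pt\hookrightarrow S^2\times S^3$, while $(h_i')_*$ is the identity on $H_2$ (primary embeddings of $M_i$ are isotopic, as in Proposition 2.10), so $p_1 h_i'|_{S^2\times pt}:S^2\to S^2$ has degree $1$ and is therefore homotopic to the identity; composing with $\eta$ gives $p_1 h_i'(a)=\eta$, i.e.\ $\alpha_i=1$. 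With $\alpha_1=\alpha_2=1$ the matrix product gives $\pi_3(H)(b)=da-b$, so the Hopf invariant of $p_1 H j_2$ equals $d$. Lemma 2.6 then shows $M_1\natural M_2$ is a $d$-twisted homology $\mc P^3$ and Corollary 2.9 shows it is spin, whence $M_1\natural M_2\in\Pi_{d}^{6}$. I expect the identification $\pi_3(h_i')(a)=a$ (the degree-one self-map argument) to be the main obstacle, since without it the Hopf invariants would not add correctly and $H$ could fail the hypotheses of Lemma 2.6.
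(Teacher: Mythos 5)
Your proposal is correct and takes essentially the same route as the paper: both use Proposition 2.10 to identify the complements $M_i-h_i(S^2\times E^4)$ with $S^2\times D^4$, rewrite $M_1\natural M_2$ in the standard form $(S^2\times D^4)\cup_H(S^2\times D^4)$ with $H=h_2'\,f_d\,(h_1')^{-1}$ (identical to the paper's $\lambda=(k_2^{-1}h_2)f_d(h_1^{-1}k_1)$), and conclude via Lemma 2.6 and Corollary 2.9. The only difference is that you spell out the $\pi_3$-matrix computation (including the step $\pi_3(h_i')(a)=a$) which the paper compresses into the unproved assertion that ``we see'' $\lambda$ satisfies conditions (1) and (2).
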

\begin{proof}
By proposition 2.10, there exist $k_i:S^2\times D^4\subset M_i,\ i=1,2$ such that $M_i - h_i(S^2\times E^4)=k_i(S^2 \times D^4)$. The diffeomorphism $h_i^{-1}k_i:S^2\times S^3 \longrightarrow S^2\times S^3$ satisfy: (1). the degree of $p_2 (h_i^{-1}k_i)j_2$ is $-1$, (2) the Hopf invariant of $p_1(h_i^{-1}k_i)j_2$ is equal to $d$, $i=1,\ 2$.

Then $M_1\natural M_2$ is diffeomorphic to $(S^2\times D^4)\cup_{\lambda}(S^2\times D^4)$, where $\lambda= (k_2^{-1}h_2)f_d(h_1^{-1}k_1)$. We see the diffeomorphism $\lambda$ satisfies: (1) the degree of $p_2 \lambda j_2$ is $-1$, (2). the Hopf invariant of $p_1\lambda j_2$ is equal to $d$. By lemma 2.6 and 2.9, $M_1\natural M_2 $ is a spin $d$-twisted homology $\mc P^3$.
\end{proof}
\begin{thm}
$(\Pi_{d}^{6},\natural)$ is an abelian group.
\end{thm}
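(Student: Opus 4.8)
The plan is to transport all four group axioms to the level of clutching diffeomorphisms. By Proposition 2.10 every $M\in\Pi_{d}^{6}$ is diffeomorphic to $(S^2\times D^4)\cup_{g_M}(S^2\times D^4)$ for an orientation-reversing $g_M=h^{-1}k$ satisfying conditions (1) and (2), and the computation in the proof of Proposition 3.2 shows that the clutching map of $M_1\natural M_2$ is $g_{M_2}^{-1}f_d\,g_{M_1}$. Thus the diffeomorphism type of $M_1\natural M_2$ is determined by $g_{M_1},g_{M_2},f_d$ modulo the moves under which $(S^2\times D^4)\cup_g(S^2\times D^4)$ is unchanged: isotopy of $g$, pre- and post-composition of $g$ by the boundary restriction of a self-diffeomorphism of $S^2\times D^4$, and the interchange $g\mapsto g^{-1}$ that swaps the two copies. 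Throughout I will use $f_d\circ f_d=\mathrm{id}$ from Lemma 3.1.

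First I would settle the unit and commutativity, both immediate from this description. For the unit take $P_d:=(S^2\times D^4)\cup_{f_d}(S^2\times D^4)$, which lies in $\Pi_{d}^{6}$ by Lemmas 2.6 and 2.9; since $g_{P_d}=f_d$, the clutching map of $M\natural P_d$ is $f_d^{-1}f_d\,g_M=g_M$, so $M\natural P_d\cong M$ for every $M$. For commutativity, the clutching map of $M_2\natural M_1$ is $g_{M_1}^{-1}f_d\,g_{M_2}$, and the interchange move replaces it by its inverse $g_{M_2}^{-1}f_d^{-1}g_{M_1}=g_{M_2}^{-1}f_d\,g_{M_1}$, which is exactly the clutching map of $M_1\natural M_2$; hence $M_1\natural M_2\cong M_2\natural M_1$.

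Next I would construct inverses. Given $M$, let $N:=(S^2\times D^4)\cup_{g_N}(S^2\times D^4)$ with $g_N:=f_d\,g_M\,f_d$. Being a composite of three orientation-reversing maps, $g_N$ is orientation-reversing, and a $\pi_3$-computation exactly as in the proof of Lemma 3.1 shows that $g_N$ again satisfies (1) and (2), so $N\in\Pi_{d}^{6}$ by Lemmas 2.6 and 2.9. Then the clutching map of $M\natural N$ is $g_N^{-1}f_d\,g_M=f_d^{-1}g_M^{-1}f_d^{-1}f_d\,g_M=f_d^{-1}=f_d$, so $M\natural N\cong P_d$.

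The hard part will be associativity. Spelling out the iterated products gives the clutching maps $g_{M_3}^{-1}f_d\,g_{M_2}^{-1}f_d\,g_{M_1}$ for $(M_1\natural M_2)\natural M_3$ and $g_{M_2}^{-1}f_d\,g_{M_3}\,f_d\,g_{M_1}$ for $M_1\natural(M_2\natural M_3)$, and these two words are visibly not equal, so the content is to show that they determine diffeomorphic manifolds. The plan is to prove this at the level of the moves above: after using commutativity to rewrite the two maps as the cyclically related words $g_{M_1}^{-1}f_d\,g_{M_2}\,f_d\,g_{M_3}$ and $g_{M_2}^{-1}f_d\,g_{M_3}\,f_d\,g_{M_1}$, one is reduced to exhibiting self-diffeomorphisms of $S^2\times D^4$ whose boundary restrictions, together with $f_d\circ f_d=\mathrm{id}$ and the interchange, carry one word into the other. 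The genuine obstacle is therefore to identify enough of the subgroup $E\subseteq\pi_0\mathrm{Diff}(S^2\times S^3)$ of classes that extend over $S^2\times D^4$ — in particular to show that the relevant $f_d$-conjugates of the $g_{M_i}$ lie in $E$ — and this is where the real geometric work of the theorem is concentrated. Once associativity is in hand, the preceding three steps complete the proof that $(\Pi_{d}^{6},\natural)$ is an abelian group.
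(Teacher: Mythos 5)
Your framework --- clutching maps modulo isotopy, pre-/post-composition by boundary restrictions of diffeomorphisms of $S^2\times D^4$, and the interchange $g\mapsto g^{-1}$ --- is sound, and your treatment of the unit, the inverse, and commutativity is correct and matches the paper's: the paper uses the same unit $M(f_d)$ and essentially the same inverse (its $M(f_d h^{-1}f_d)$ agrees with your $M(f_d g_M f_d)$ after one interchange move). Indeed you do slightly better than the paper on commutativity, which the paper merely asserts, whereas your interchange argument proves it.

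The genuine gap is associativity: you correctly set it up, correctly observe that the two words $g_{M_3}^{-1}f_d g_{M_2}^{-1}f_d g_{M_1}$ and $g_{M_2}^{-1}f_d g_{M_3} f_d g_{M_1}$ are not equal, and then stop, offering only a plan --- and the plan points in the wrong direction. Determining the subgroup $E\subseteq\pi_0\mathrm{Diff}(S^2\times S^3)$ of classes that extend over $S^2\times D^4$ is a hard mapping-class-group computation that nothing in this circle of ideas requires, and there is no reason the particular $f_d$-conjugates you name should lie in $E$; in fact the discrepancy between the two words is the single letter $g_{M_2}$ versus $g_{M_2}^{-1}$, and a relation of the form $g_{M_2}^{-1}=e_1 g_{M_2} e_2$ with $e_i\in E$ is exactly what one cannot extract from the moves alone. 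The missing idea is geometric and is the same fact that makes $\natural$ well defined, namely isotopy uniqueness of framed primary embeddings (framings being unique up to homotopy since $\pi_2(SO(4))=0$). Concretely: choose in $M_2$ two disjoint framed primary tubes $h_2$ and $h_2'$ (say $h_2$ and a shrunken copy of the complementary tube $k_2$). Computing $(M_1\natural M_2)\natural M_3$ with the inner choice $(h_1,h_2)$ and the outer choice $(h_2',h_3)$ exhibits it as the triple gluing of $M_1-h_1(S^2\times E^4)$, $M_2-h_2(S^2\times E^4)-h_2'(S^2\times E^4)$, and $M_3-h_3(S^2\times E^4)$, with $M_1$ attached along the $h_2$-boundary and $M_3$ along the $h_2'$-boundary; computing $M_1\natural(M_2\natural M_3)$ with the inner choice $(h_2,h_3)$ and the outer choice $(h_1,h_2')$ gives the identical triple gluing except that the roles of $h_2$ and $h_2'$ are exchanged. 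Since $h_2$ and $h_2'$ are isotopic framed primary tubes, the independence-of-choices argument already invoked for well-definedness of $\natural$ applies verbatim and yields $(M_1\natural M_2)\natural M_3\cong M_1\natural(M_2\natural M_3)$. To be fair, the paper itself dismisses this point with ``it is not difficult to see,'' so your instinct that something real must be checked is correct --- but the check is a reuse of well-definedness, not a computation of $E$, and without it your proposal does not yet prove the theorem.
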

\begin{proof}
For any $M\in \Pi_{d}^{6}$, by lemma 2.8, $M\cong M(h)= (S^2\times D^4)\cup_h (S^2\times D^4)$.
We have $M(f_d)\natural M\cong (S^2\times D^4)\cup_{hf_df_d}(S^2\times D^4)=M(h)\cong M$, since $f_df_d=id$. And we have $M\natural M(f_d)\cong M$ for the same reason. For any $M\cong M(h)$, $M(f_d h^{-1} f_d)$ is also a $d$-twisted homology $\mc P^3$ and $M(h)\natural M(f_d h^{-1} f_d)=M(f_d)$. It is not difficulty to see $M_1\natural M_2 \cong M_2\natural M_1$ and $\natural$ is associative. Therefore, $(\Pi_{d}^{6},\natural )$ is an abelian group with unit $M(f_d)$.
\end{proof}

\begin{rem}
When $d=1$, $\Pi_1^{6}$ is just the set of diffeomorphism classes of homotopy $\mc P^3$ and this group $(\Pi_{1}^{6},\natural)$ is just the group defined by Montgomery and Yang in \cite{MY} with unit $M(f_1)\cong \mc P^3$.
\end{rem}

\section{The correspondence $\Phi: \Pi_{d}^{6} \longrightarrow \Sigma^{6,3}$}
\subsection{$\Sigma^{6,3}$}
Let $\Sigma^{6,3}$ be the set of isotopy classes of embedding of $S^3$ in $S^6$. $\Sigma^{6,3}$ is an abelian group whose operation is defined as follows (cf \cite{Haef}): let $f_1,f_2$ be two embedding classes of $S^3$ in $S^6$. Then $f_1$ and $f_2$ is isotopic to $g_1$ and $g_2$ such that:\\
(1). $g_1 |D_{-}^3$ is the standard embedding of $D_{-}^3\hookrightarrow D_{-}^6$ and $g_2 |D_{+}^3$ is the standard embedding of $D_{+}^3\hookrightarrow D_{+}^6$. (2). $g_1(D_{+}^3)\subset D_{+}^6$, $g_2 (D_{-}^3)\subset D_{-}^6$.
Define $f_1 + f_2$ by:
\[
f_1 + f_2 (x)=\left\{
\begin{array}{ll}
g_1(x), & x\in D_{+}^3\\
g_2 (x), & x\in D_{-}^3
\end{array}
\right..
\]
Under this operation, Haefliger \cite{Haef} proved that:

\begin{thm}[Haefliger]
$(\Sigma^{6,3},+)\cong \mz$.
\end{thm}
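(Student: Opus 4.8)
The plan is to compute $\Sigma^{6,3}$ by producing a complete $\mz$-valued invariant coming from a Seifert surface, after first replacing isotopy classes by concordance classes. The reduction rests on the codimension-$3$ principle that two smooth embeddings $S^3\hookrightarrow S^6$ which are smoothly concordant (cobordant through an embedding $S^3\times[0,1]\hookrightarrow S^6\times[0,1]$) are already smoothly isotopic: the Whitney trick is unobstructed here, since the relevant Whitney disks are $2$-dimensional while the ambient dimension $7$ far exceeds $2\cdot 2+1$, so one may engulf the concordance into a product. Passing to concordance classes is convenient because they behave well under cutting, pasting, and surgery, and because the operation $+$ of the present section becomes the connected sum of knots.

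To build the invariant, start from $\phi:S^3\hookrightarrow S^6$. The class $[\phi(S^3)]$ vanishes in $H_3(S^6)$ and the normal bundle of $\phi$ is trivial (it has rank $3$ over the parallelizable $S^3$, and $\pi_2(SO(3))=0$), so a Pontryagin--Thom argument applied to the complement $S^6-\phi(S^3)$, which has the integral homology of $S^2$, produces a compact oriented Seifert manifold $W^4\subset D^7$ with $\partial W=\phi(S^3)\subset\partial D^7=S^6$. I would then define $\lambda(\phi)\in\mz$ as a \emph{self-linking invariant} of $W$: a section of the normal bundle of $W$ in $D^7$ pushes $W$ off itself, and the resulting linking pairing on the middle-dimensional homology $H_2(W)$ (equivalently, a framing obstruction) yields an integer. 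Independence of the chosen Seifert surface follows because any two such surfaces glue to a closed oriented $4$-manifold embedded in $S^7$, which contributes trivially to $\lambda$; additivity under boundary connected sum of Seifert surfaces then shows that $\lambda:\Sigma^{6,3}\to\mz$ is a homomorphism. Surjectivity is established by exhibiting an explicit knotted $S^3\subset S^6$ --- Haefliger's knot, obtained from a clutching/plumbing construction --- whose Seifert surface has self-linking $1$, so that $\lambda$ is onto.

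The heart of the argument, and the step I expect to be hardest, is injectivity. Suppose $\lambda(\phi)=0$; I would simplify the Seifert surface $W^4\subset D^7$ by \emph{embedded} surgery. Low-dimensional surgeries first kill $\pi_1$ and $H_1(W)$, after which the essential point is that the middle-dimensional homology $H_2(W)$ can be removed by embedded surgeries \emph{exactly} when $\lambda(\phi)=0$, i.e. the invariant is precisely the obstruction. Each abstract surgery is realizable by an embedded one because the requisite Whitney disks sit in the $7$-dimensional ambient space with ample room. Once $W$ has been surgered to an embedded disk $D^4$, the knot $\phi(S^3)=\partial D^4$ bounds a disk and is therefore unknotted, hence concordant and --- by the reduction above --- isotopic to the standard embedding; thus $\ker\lambda=0$. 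Surjectivity and injectivity together give $\lambda:\Sigma^{6,3}\xrightarrow{\ \cong\ }\mz$. The genuine difficulty concentrated in this last step is twofold: proving that the single integer $\lambda$ is a \emph{complete} obstruction to surgering the Seifert surface down to a disk, and verifying that every surgery can be made embedded, which is exactly the technical content of Haefliger's theorem.
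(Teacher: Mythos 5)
First, a structural point: the paper does not prove this statement at all. Theorem 4.1 is quoted as an external result, with the group operation recalled in Section 4.1 and the proof attributed entirely to Haefliger \cite{Haef} (with the knotting phenomenon itself going back to \cite{Haef1}). So there is no ``paper's own proof'' to compare against; your proposal has to stand on its own as a proof of a deep theorem, and as written it does not. It is an outline whose central step you yourself defer: you write that showing $\lambda$ is a complete obstruction to surgering the Seifert surface to a disk ``is exactly the technical content of Haefliger's theorem.'' That is an admission of circularity --- the proposal reduces Haefliger's theorem to Haefliger's theorem.

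Beyond that structural issue there are concrete gaps. (a) The invariant $\lambda$ is never actually defined: ``the resulting linking pairing on $H_2(W)$ yields an integer'' is not a definition --- a bilinear pairing does not canonically produce an integer, and the known Seifert-surface formulas for the Haefliger invariant are delicate, involving the signature of the bounding $4$-manifold together with correction terms coming from its normal data; without pinning this down, neither well-definedness nor additivity can even be checked. (b) Your general-position justification proves too much. The claim that embedded surgery is unobstructed ``because the requisite Whitney disks sit in the $7$-dimensional ambient space with ample room'' is exactly the reasoning that would unknot every smooth $S^3\subset S^6$; that argument is valid in the PL category (Zeeman's codimension $\geqslant 3$ unknotting) and its smooth failure is precisely Haefliger's discovery, i.e.\ precisely why $\Sigma^{6,3}\cong\mz$ rather than $0$. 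Any correct proof must locate the obstruction in smooth framing/bundle data invisible to general position, so this sentence is not merely a gap but points in a direction incompatible with the theorem. (c) The opening reduction, that smooth concordance implies smooth isotopy in codimension $\geqslant 3$, is itself a nontrivial theorem of Haefliger and cannot be dispatched by a one-line Whitney-trick remark. In short: the skeleton (Seifert surface, $\mz$-valued obstruction, ambient surgery) does resemble how such computations are organized in the literature, but every load-bearing step is either undefined, asserted, or delegated back to the result being proved.
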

\subsection{$\Phi: \Pi_{d}^{6}\longrightarrow \Sigma^{6,3}$}
Now we construct a map from $\Pi_{d}^6$ to $\Sigma^{6,3}$:

First, for any $M\in \Pi_{d}^{6}$, choose an arbitrary embedding $h:S^2\times D^4\subset M$ such that $h(x,0)$ is a primary embedding and we also get an embedding $k:S^2\times D^4\cong M-h(S^2\times D^4)\subset M$ with primary embedding $k(x,0)$. Let $\alpha:=f_d h^{-1}k:S^2\times S^3\longrightarrow S^2\times S^3$ be the self diffeomorphism of $S^2\times S^3$, where $f_d$ was defined in subsection 3.1. We see this clutching map satisfies $deg(p_2\alpha j_2)=1$ and $p_1\alpha j_2\simeq pt$. Then one can construct a simply connected closed manifold:
\[
P:=(M-h(S^2\times E^4))\cup_{f_d h^{-1}}(D^3\times S^3)\cong (S^2\times D^4)\cup{\alpha}(D^3\times S^3)
\]
with an embedding $i:S^3\hookrightarrow D^3\times S^3\subset P$. We see $P$ is simply-connected and $H^*(P,\mz)\cong H^*(S^6,\mz)$ by Meyer-Vietoris exact sequence. $P$ is diffeomorphic to $S^6$ because $S^6$ is the only homotopy sphere in dimension 6. Thus we define:
\[
\Phi(M):=[P,i]
\]
Since any primary embeddings and their normal bundles are isotopic, $\Phi$ is well-defined. On the other hand, $\Pi_{d}^{6}$ and $\Sigma^{6,3}$ both admit group structures. We can prove:
\begin{prop}
$\Phi$ is a group homomorphism.
\end{prop}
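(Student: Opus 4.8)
The plan is to show that $\Phi(M_1 \natural M_2) = \Phi(M_1) + \Phi(M_2)$ by building the connected-sum embedding of $S^3$ in $S^6$ directly from the geometric definition of $\natural$. First I would fix primary embeddings $h_i : S^2 \times D^4 \subset M_i$ with complements $k_i(S^2 \times D^4) = M_i - h_i(S^2 \times E^4)$, so that, by Proposition 2.10 and the construction in Subsection 4.2, $\Phi(M_i) = [P_i, \iota_i]$ where $P_i \cong (S^2 \times D^4) \cup_{\alpha_i} (D^3 \times S^3)$ with $\alpha_i = f_d h_i^{-1} k_i$ and the knot $\iota_i : S^3 \hookrightarrow D^3 \times S^3 \subset P_i$. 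The core computation is to unwind the definition
\[
M_1 \natural M_2 = (M_1 - h_1(S^2 \times E^4)) \cup_{h_2 f_d h_1^{-1}} (M_2 - h_2(S^2 \times E^4))
\]
and apply the $\Phi$-construction to it, tracking how the two clutching data $\alpha_1$ and $\alpha_2$ combine after one of the copies of $S^2 \times D^4$ is replaced by a $D^3 \times S^3$ handle carrying the knot.

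Next I would produce the standard-form representatives needed for Haefliger's connected sum. Recall from Subsection 4.1 that $f_d \circ f_d = \mathrm{id}$, so the attaching maps telescope: when I form $\Phi(M_1 \natural M_2)$ I may surger out a tubular neighborhood of the primary $S^2$ in $M_1$ and glue in $D^3 \times S^3$, and the remaining piece is governed by the composite $f_d h_2^{-1} (h_2 f_d h_1^{-1}) = h_1^{-1}$ on the gluing region, which collapses the $M_2$-side back to the clutching data of $M_1$ away from the knot. The upshot should be that the resulting $S^6$ decomposes into a hemisphere $D^6_-$ on which the knot agrees with $\iota_1$ restricted to $D^3_-$ and a hemisphere $D^6_+$ on which it agrees with $\iota_2$ restricted to $D^3_+$, precisely matching Haefliger's recipe for $\Phi(M_1) + \Phi(M_2)$. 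I would then isotope the embeddings so that conditions (1) and (2) in the definition of the group operation on $\Sigma^{6,3}$ hold, using that any two primary embeddings and their trivialized normal bundles are isotopic.

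To finish I would also verify that $\Phi$ carries the unit to the unit, i.e. $\Phi(M(f_d)) = 0 \in \Sigma^{6,3}$, since $M(f_d)$ is the identity of $(\Pi_d^6, \natural)$ by Theorem 3.3; here the clutching map becomes $\alpha = f_d f_d = \mathrm{id}$, so the knot $\iota$ bounds the standard $D^4$ and represents the unknot, giving the trivial class. Together with additivity this yields the homomorphism property, and one should note that a map respecting the binary operation and the identity between groups is automatically a homomorphism, so compatibility with inverses is free.

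I expect the main obstacle to be the bookkeeping in the second paragraph: verifying at the level of gluing diffeomorphisms that the two knots $\iota_1, \iota_2$ sit in complementary hemispheres of the $S^6$ obtained from $M_1 \natural M_2$, rather than interacting nontrivially. This requires careful control of the orientation-reversing clutching maps and a cut-and-paste argument on $S^2 \times S^3$, exploiting $f_d^2 = \mathrm{id}$ and the isotopy-uniqueness of primary embeddings to put every gluing map into the standard position demanded by Haefliger's definition of $+$. Once the decomposition into hemispheres is established, identifying the class with $\Phi(M_1) + \Phi(M_2)$ is immediate from the definitions.
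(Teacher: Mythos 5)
Your overall strategy is the same as the paper's: apply the $\Phi$-construction to $M_1\natural M_2$, compute the resulting clutching diffeomorphism of $S^2\times S^3$, and then put the gluing maps into Haefliger's standard position so that the knot splits into two hemispheres realizing the sum in $\Sigma^{6,3}$. However, the step you yourself flag as ``the core computation'' is wrong. You telescope
\[
f_d h_2^{-1}\,\bigl(h_2 f_d h_1^{-1}\bigr) \;=\; h_1^{-1},
\]
using $h_2^{-1}h_2=\mathrm{id}$ and $f_d^2=\mathrm{id}$. This cancellation is illegitimate: in $M_1\natural M_2=(M_1-h_1(S^2\times E^4))\cup_{h_2 f_d h_1^{-1}}(M_2-h_2(S^2\times E^4))$ the tube $h_2(S^2\times E^4)$ has already been deleted, so it is not available to be surgered out. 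The only primary $S^2$ on the $M_2$-side lies in the complementary tube $k_2(S^2\times D^4)=M_2-h_2(S^2\times E^4)$, so the $\Phi$-surgery must be performed with $k_2$, and the composite gluing map is $f_d\,k_2^{-1}h_2\,f_d\,h_1^{-1}$. Here $k_2^{-1}h_2$ is the clutching diffeomorphism of $M_2$ itself and does not cancel --- nor should it: if the composite really collapsed to $h_1^{-1}$, then $\Phi(M_1\natural M_2)$ would be independent of $M_2$, contradicting the very additivity you are trying to prove (and the bijectivity of $\Phi$ established afterwards). Consequently the hemisphere decomposition asserted in your next paragraph (``the upshot should be\dots'') is not established by your argument; the collapse you derived would in fact preclude it.

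What the paper does at this point is keep both pieces of data. Writing $\alpha=f_dh^{-1}h'$ and $\beta=f_dk^{-1}k'$ for the clutching maps defining $\Phi(M)$ and $\Phi(N)$, it shows that the clutching map for $\Phi(M\natural N)$ is the honest composite $\beta\alpha$, i.e.\ $\Phi(M\natural N)$ is the knot $S^3\hookrightarrow D^3\times S^3\subset (S^2\times D^4)\cup_{\beta\alpha}(D^3\times S^3)\cong S^6$. It then isotopes $\alpha$ to be the identity on $S^2\times D_{-}^3$ and $\beta$ to be the identity on $S^2\times D_{+}^3$, so that this knot agrees with $j_1$ in one hemisphere $D^6_{\pm}$ and with $j_2$ in the other, which is exactly Haefliger's definition of $j_2+j_1$. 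That composite-clutching computation is what your proof needs in place of the false telescoping. A minor further point: your verification that $\Phi$ sends the unit $M(f_d)$ to the unknot is superfluous, since any map between groups that respects the binary operation automatically preserves units and inverses.
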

\begin{proof}
Follow the method of \cite{MY}, let $M,\ N \in \Pi_{d}^{6}$ and $h:S^2\times D^4\subset M$ and $k:S^2\times D^4\subset N$ be two embeddings such that $h(x,0)$ and $k(x,0)$ are primary embeddings. We know there exist two embeddings $h':S^2\times D^4\cong M-h(S^2\times E^4)$ and $k':S^2\times D^4 \cong N-k(S^2\times E^4)$ with primary embeddings $h'(x,0)$ and $k'(x,0)$. By the definition of $\Phi$, $\Phi(M)$ is the embedding:
\[
j_1: S^3\hookrightarrow D^3\times S^3\subset (M-h(S^2\times E^4))\cup_{f_d h^{-1}}(D^3\times S^3)\cong (S^2\times D^4)\cup_{\alpha}(D^3\times S^3),
\]
and $\Phi(N)$ is the embedding:
\[
j_2: S^3\hookrightarrow D^3\times S^3\subset (N-k(S^2\times E^4))\cup_{f_d h^{-1}}(D^3\times S^3)\cong (S^2\times D^4)\cup_{\beta}(D^3\times S^3),
\]
where $\alpha=f_d h^{-1}h'$ and $\beta=f_d k^{-1}k'$ are the self diffeomorphism of $S^2\times S^3$.
We continue to make $\alpha$ and $\beta$ diffeotopic to the diffeomorphism such that $\alpha$ is identity on $S^2\times D_{-}^{3}$ and $\beta$ is identity on $S^2\times D_{+}^{3}$. Certainly, $S^2\times D_{+}^{4}\cup D^3\times D_{+}^{3}\cong D_{+}^{6}$ and $S^2\times D_{-}^{4}\cup D^3\times D_{-}^{3}\cong D_{-}^{6}$.

By the definition of the operation $\natural$: $M\natural N=M-h(S^2\times E^4)\cup_{k'f_dh^{-1}}N-k'(S^2\times E^4)\cong h'(S^2\times D^4)\cup_{k'f_dh^{-1}} k(S^2\times D^4)$. $\Phi(M\natural N)$ is the embedding:
\[
j:S^3\hookrightarrow D^3\times S^3\subset (M\natural N -k(S^2\times E^4))\cup_{f_d k^{-1}} D^3\times S^3
\]
\[
\cong (S^2\times D^4)\cup_{f_d k^{-1}k'f_dh^{-1}h'}(D^3\times S^3 )\cong (S^2\times D^4)\cup_{\beta \alpha}(D^3\times S^3 ).
\]
We see $j$ is just the sum of $j_2$ and $j_1$ under $\Sigma^{6,3}$ and we obtain $\Phi(M\natural N)=\Phi(M)+\Phi(N)$.
\end{proof}

\begin{thm}
$\Phi:\Pi_{d}^{d}\longrightarrow \Sigma^{6,3}$ is isomorphic.
\end{thm}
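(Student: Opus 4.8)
The plan is to upgrade the homomorphism $\Phi$ of Proposition 3.4 to an isomorphism by proving it is a bijection; since both $\Pi_d^6$ and $\Sigma^{6,3}\cong\mz$ carry group structures, a bijective homomorphism is automatically a group isomorphism. I would establish bijectivity by constructing an explicit inverse $\Psi:\Sigma^{6,3}\longrightarrow\Pi_d^6$ that reverses the cut-and-paste procedure defining $\Phi$, and then verifying $\Psi\circ\Phi=\mathrm{id}$ and $\Phi\circ\Psi=\mathrm{id}$. This is exactly the strategy of \cite{MY}, now carried out with the clutching map $f_d$ in place of $f_1$.

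Given a class $[i]\in\Sigma^{6,3}$, represent it by $i:S^3\hookrightarrow S^6$. The normal bundle of $i(S^3)$ is a rank-$3$ bundle over $S^3$, hence trivial since such bundles are classified by $\pi_2(SO(3))=0$; thus $i(S^3)$ has a tubular neighborhood diffeomorphic to $D^3\times S^3$ with core $\{0\}\times S^3=i(S^3)$. Writing $C$ for the complement of the open tube, we have $S^6\cong C\cup_\phi(D^3\times S^3)$ with $\partial C\cong S^2\times S^3$. I would then define $\Psi(i):=C\cup_\psi(S^2\times D^4)$, where $\psi:S^2\times S^3\to\partial C$ is obtained from the identification $\phi$ precomposed with $f_d$, chosen precisely so that the clutching data carries degree $-1$ and Hopf invariant $d$. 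Granting that $C$ is the correct building block, Lemma 2.6 and Lemma 2.9 then show that $\Psi(i)$ is a spin $d$-twisted homology $\mc P^3$. Well-definedness of $\Psi$ (independence of the chosen tube and of the representative $i$ within its isotopy class) will follow from uniqueness of tubular neighborhoods up to ambient isotopy together with the isotopy extension theorem.

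The hard part will be to show that $C$ is diffeomorphic to $S^2\times D^4$. By Alexander duality, $C$ is simply connected with $H_*(C,\mz)\cong H_*(S^2,\mz)$ and boundary $S^2\times S^3$. I would run the same $h$-cobordism argument used in the proof of Proposition 2.10: represent the generator of $\pi_2(C)$ by a primary embedding $S^2\hookrightarrow C$ with trivial normal bundle, observe that this embedding is a homotopy equivalence onto $C$ by Whitehead's theorem, and conclude via the $h$-cobordism theorem that $C$ is the total space of the (trivial) normal disk bundle, namely $S^2\times D^4$. Once this is in hand, $\Psi(i)=(S^2\times D^4)\cup_\psi(S^2\times D^4)$ genuinely lands in $\Pi_d^6$.

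It then remains to check the two compositions. By construction $\Psi$ glues back exactly the $S^2\times D^4$ that $\Phi$ removed, so $\Psi\circ\Phi=\mathrm{id}$; conversely $\Phi$ deletes the reinstalled $S^2\times D^4$ and replaces the tube $D^3\times S^3$ with core the same $S^3$, recovering the embedding class $[i]$, so $\Phi\circ\Psi=\mathrm{id}$. Both identities rest on the uniqueness up to isotopy of primary embeddings and of their tubular neighborhoods, already exploited throughout Section 2, so that the intermediate choices do not affect the diffeomorphism or isotopy classes. Hence $\Phi$ is a bijective homomorphism, and therefore an isomorphism, which is the assertion of the theorem.
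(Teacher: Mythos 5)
Your surjectivity direction matches the paper's: the paper likewise decomposes $S^6\cong (S^2\times D^4)\cup_{kh^{-1}}(D^3\times S^3)$ using triviality of the normal bundle of the embedded $S^3$ and an $h$-cobordism identification of the complement with $S^2\times D^4$, and then glues by $\psi=f_d(kh^{-1})$ to produce a preimage. The gap lies in your treatment of well-definedness of $\Psi$ and of the compositions (equivalently, injectivity of $\Phi$). You assert these follow from ``uniqueness of tubular neighborhoods up to ambient isotopy together with the isotopy extension theorem.'' That is not enough. A tubular neighborhood of $i(S^3)$ is unique up to isotopy only as an unparametrized neighborhood; identifying it with $D^3\times S^3$ requires a choice of normal framing, and homotopy classes of framings form a torsor over $\pi_3(SO(3))\cong\mz$. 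Changing the framing by $g:S^3\longrightarrow SO(3)$ changes the clutching diffeomorphism by $G(x,y)=(g(y)x,y)$, and this changes the Hopf invariant of the resulting clutching map by the image of $[g]$ under $\pi_3(SO(3))\longrightarrow \pi_3(SO(3)/SO(2))=\pi_3(S^2)$, which is an isomorphism. So different framings of the same tube yield glued manifolds with different values of $d$: without a normalization, your $\Psi$ does not even land in $\Pi_d^6$, let alone take a well-defined value there, and the claim ``$\Psi$ glues back exactly what $\Phi$ removed'' silently presupposes a preferred framing.

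This framing ambiguity is exactly where the paper's proof does its real work, in both directions. For surjectivity, the paper invokes the technique of \cite{MY} (p.\ 494) to normalize the framing $k$ so that $p_1(kh^{-1})j_2\simeq pt$, which is possible because $\pi_3(SO(3))\longrightarrow\pi_3(S^2)$ is onto. For injectivity, the paper shows that if $\Phi(M_1)$ and $\Phi(M_2)$ are isotopic, then after an isotopy $\lambda_1=\lambda_2 G$ with $G(x,y)=(g(y)x,y)$; since both manifolds have Hopf invariant $d$, the class of $g$ maps to $0\in\pi_3(S^2)$, and injectivity of $\pi_3(SO(3))\longrightarrow\pi_3(S^2)$ forces $g\simeq pt$, hence $\lambda_1$ is isotopic to $\lambda_2$ and $M_1\cong M_2$. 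This rigidity statement --- that the framing ambiguity is faithfully detected by the Hopf invariant of the clutching map --- is the essential content of the theorem and is absent from your proposal; once it is supplied, your inverse-map formulation becomes equivalent to the paper's surjectivity-plus-injectivity argument. (A minor further point: Alexander duality gives only the homology of the complement $C$; its simple connectivity needs a general-position argument, codimension $3$ being at least $3$.)
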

\begin{proof}
For any embedding $j:S^3\hookrightarrow S^6$, the normal bundle of $j$ is trivial and there exists an embedding $k:S^3\times D^3\subset S^6$ with $k(x,0)=j$. The complement $S^6 -k(S^3\times E^3)$ is homotopy equivalent to $S^2$ and by $h$-cobordism theorem, we have a diffeomorphism $h:S^2\times D^4\cong S^6 -k(S^3\times E^3)$. So we have a decomposition $S^6=h(S^2\times D^4)\cup k(D^3\times S^3)\cong S^2\times D^4\cup_{kh^{-1}}D^3\times S^3$, where $kh^{-1}$ is the self diffeomorphism of $S^2\times S^3$.
By the technique of \cite{MY} (cf p494), one can modify $k$ to make the attaching map $kh^{-1}$ admit $degp_2(kh^{-1})j_2=1$ and $p_1(kh^{-1})j_2\simeq pt$.

Define $M(\psi):=(S^2\times D^4)\cup_{\psi}(S^2\times D^4)$, where $\psi=f_d (kh^{-1})$. Then we see $M(\psi)\in \Pi_{d}^{6}$ and $\Phi(M)=[S^6,j]\in \Sigma^{6,3}$. So $\Phi$ is surjective. Next, we want to prove $\Phi$ is injective.

For $M_1=(S^2\times D^4)\cup_{\lambda_1}(S^2\times D^4)$ and $M_2=(S^2\times D^4)\cup_{\lambda_2}(S^2\times D^4)$, if the embeddings $\Phi(M_1)=j_1:S^3\hookrightarrow D^3\times S^3\subset (S^2\times D^4)\cup_{f_d\lambda_1}(D^3\times S^3)$ and $\Phi(M_2)=j_2:S^3\hookrightarrow D^3\times S^3\subset (S^2\times D^4)\cup_{f_d\lambda_2}(D^3\times S^3)$ are isotopic. Then by the tubular neighborhood theorem of $j_1,j_2$, after an isotopy which does not affect the diffeomorphism type, there exists a diffeomorphism $G:S^2\times S^3\longrightarrow S^2\times S^3, \ G(x,y)=(g(y)x,y)$ such that $\lambda_1=\lambda_2 G$, where $g:S^3\longrightarrow SO(3)$. Since the Hopf invariant of $p_1\lambda j_2$ and $p_1\lambda_2 j_2$ are the same, which implies $p_2Gj_2=p_2 g=0\in \pi_3 (S^2)$. Since $\pi_3(SO(3))\cong \pi_3(SO(3)/SO(2))=\pi_3(S^2)$, we see $g\simeq pt$ and $\lambda_1$ is isotopic to $\lambda_2$ and $M_1\cong M_2$.
\end{proof}

\begin{rem}
From Wall's paper \cite{Wall}, we see for $M\in \Pi_{d}^{6}$ with $H^*(M,\mz)=\mz[x,y]/(x^2-dy, y^2), degx=2,degy=4$, the first Pontrjagin class $p_1(M)$ of $M$ is equal to $(4d-24\Phi(M))y$. If $\Phi(M_1)=\Phi(M_2)$, by the classification theorem of simply connected closed spin 6-manifolds with torsion free integral homology (cf \cite{Wall}), $M_1\cong M_2$.
\end{rem}
\subsection{Homotopy type of $\Pi_{d}^{6}$}
When $d=1$, $\Pi_{1}^{6}$ is the set of diffeomorphism classes of homotopy $\mc P^3$, what about the set $\Pi_{d}^{6}$, $d>1$ ?  In their paper \cite{LW}, Libgober and Wood proved:
\begin{thm}[Libgober, Wood]
If $n=2m+1$ and if $d$ has not divisors less than $m+2$, then any two $d$-twisted homology $\mc P^3$ are homotopy equivalent.
\end{thm}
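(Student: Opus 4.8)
The plan is to realize every $d$-twisted homology $\mc P^n$ (writing $n=2m+1$ for the complex dimension, so that the space has real dimension $2n=4m+2$) as a simply connected CW complex $X$ with a single cell in each even dimension $0,2,4,\dots,2n$, and to build a homotopy equivalence between any two of them by induction over the skeleta. Such a complex is determined up to homotopy by its sequence of attaching maps $\phi_k\in\pi_{2k-1}(X^{(2k-2)})$, where $X^{(j)}$ denotes the $j$-skeleton, so the task is to show that the cohomology ring together with the arithmetic hypothesis on $d$ forces these maps to agree up to self-equivalences of the skeleta. The base case is immediate: the $2$-skeleton is $S^2$, and the $4$-cell is attached by a map of Hopf invariant $d$ (forced by the relation $x^2=d\,y$), so the $4$-skeleta of any two such spaces coincide and I fix an identification $g_2$ there.

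At the inductive step I have an equivalence $g\colon X^{(2k-2)}\to X'^{(2k-2)}$ respecting the cohomology generators, and extending it over the top cell requires matching attaching maps, i.e. solving $\phi_k'\simeq g_\ast\phi_k$ inside $\pi_{2k-1}(X'^{(2k-2)})$. Filtering this homotopy group by the skeletal cells exhibits a free part, which carries the cohomological information and is matched automatically by the construction of $g$; the entire difficulty is concentrated in the torsion subgroup, which is a subquotient of the torsion of the homotopy groups of the spheres $S^{2j}$, $j\le k-1$, making up $X'^{(2k-2)}$.

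The arithmetic now enters as one expects from Serre's finiteness theorems: in the stem range relevant to cells of dimension $\le 2n$, the $p$-primary torsion of the homotopy groups of spheres vanishes unless $p\le m+1$. I would therefore argue one prime at a time. Rationally the twist is invisible, since the nonzero integer $d$ becomes invertible and the relation $x^2=d\,y$ is absorbed into the choice of generator; hence every such space has the rational homotopy type of $\mc P^n$ and in particular $X_{\mathbb{Q}}\simeq X'_{\mathbb{Q}}$. For a prime $p\ge m+2$ the absence of $p$-torsion in the relevant stems pins the $p$-local homotopy type down to the cohomology ring, which $X$ and $X'$ share, so again $X_{(p)}\simeq X'_{(p)}$. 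The hypothesis that $d$ has no divisor less than $m+2$ matters only at the torsion primes $p\le m+1$, where it says precisely that $d$ is a unit in $\mz_{(p)}$; there I use this unit to produce self-equivalences of $X'^{(2k-2)}$ rescaling the generator $x$ by arbitrary units of $\mz_{(p)}$, and these act on the torsion of $\pi_{2k-1}(X'^{(2k-2)})$ so as to absorb the discrepancy $\phi_k'-g_\ast\phi_k$, allowing me to adjust $g$ and extend it over the $2k$-cell.

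The main obstacle is exactly this $p$-local normalization at the small primes: I must identify which torsion summands actually survive in $\pi_{2k-1}(X'^{(2k-2)})_{(p)}$ once the lower attaching maps are installed, and verify that the self-equivalences realizing unit rescalings of $x$ act on each surviving summand with enough transitivity — a condition that should reduce, through the factor of $d$ carried by the cohomology relations, to the invertibility of $d$ modulo $p$. Once this is carried out compatibly across all dimensions, the rational equivalence together with the $p$-local equivalences for every prime glue, via the arithmetic fracture square for simply connected finite complexes, into a global homotopy equivalence $X\simeq X'$, which proves the theorem.
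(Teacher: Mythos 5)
A preliminary remark: the paper contains no proof of this statement. It is quoted as Theorem 4.4 directly from Libgober--Wood \cite{LW} (the statement should read ``$d$-twisted homology $\mc P^n$'', $n=2m+1$, as you correctly interpreted it), and only its corollary for $n=3$, $d$ odd is used later. So your proposal can only be judged on its own merits, and it has two genuine gaps.

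The central gap is numerical. Your dichotomy is: for $p\le m+1$ the hypothesis forces $p\nmid d$, while for $p\ge m+2$ there is no $p$-torsion in the homotopy groups of spheres in the relevant stem range. The second claim is false once $m\ge 3$, because your induction runs through all cells up to the top dimension $2n=4m+2$, so the relevant groups $\pi_{2k-1}(S^{2j})$ involve stems as large as $2n-3=4m-1$, whereas the first $p$-torsion appears already in stem $2p-3$. In fact $p$-torsion occurs for every prime $p\le 2m$: the group $\mz_p\cong\pi_{2p}(S^3)$ suspends injectively into $\pi_{2p+1}(S^4)$, which enters the attaching map of the $(2p+2)$-cell, and $2p+2\le 4m+2$ exactly when $p\le 2m$. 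Concretely, for $n=7$, $m=3$, the hypothesis allows $d=5$, yet the attaching maps of the $12$- and $14$-cells see $\pi_{11}(S^4)\cong\mz_{15}$ and $\pi_{13}(S^6)\cong\mz_{60}$, both with $5$-torsion; your argument is silent at $p=5$. Note that $2p-3\le 2m-1$, i.e.\ $p\le m+1$, is precisely the torsion range for attaching maps of cells of dimension $\le 2m+2$. This pinpoints the missing idea: a Poincar\'e duality (Wall-type) reduction showing that for a closed $(4m+2)$-dimensional manifold or Poincar\'e complex only the bottom half of the cell structure is free data, the upper attaching maps being forced by duality. Only with such a reduction does the hypothesis ``no divisors $<m+2$'' match the torsion primes; without it the skeletal induction cannot close.

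The second gap is the final step. Even granting all local equivalences, ``$X_{(p)}\simeq X'_{(p)}$ for every $p$ and $X_{\mathbb{Q}}\simeq X'_{\mathbb{Q}}$'' does not imply $X\simeq X'$: this is exactly the Mislin genus phenomenon, with Hilton--Roitberg's fake $Sp(2)$ as the classical counterexample. The arithmetic fracture square recovers $X$ only from the localizations \emph{together with} compatible gluing data over the rationalization, so you would need to choose the $p$-local equivalences coherently with a single rational one --- an argument you do not supply. The standard way to avoid this is to keep the induction integral: fix one map $g$ and show each integral obstruction, an element of a finite abelian group, vanishes because it vanishes $p$-locally for each $p$. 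Finally, even at the small primes your mechanism (rescaling $x$ by units of $\mz_{(p)}$) is unsubstantiated, as you yourself flag; what one actually exploits are self-equivalences inducing the identity on cohomology, whose effect on attaching maps goes through compositions and Whitehead products, and that is where the divisibility of $d$ truly enters.
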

When $n=3$ and $d$ is odd, we get:
\begin{coro}
If $d$ is odd, for any $M_1$, $M_2\in \Pi_{d}^{6}$, $M_1\simeq M_2$.
\end{coro}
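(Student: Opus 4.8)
The plan is to obtain this corollary as a direct specialization of the Libgober--Wood theorem stated immediately above, taking $n=3$. First I would identify the relevant value of the integer $n$ appearing in that theorem: for the $d$-twisted homology $\mc P^3$ under consideration, $n=3$ is the complex dimension of $\mc P^3$, and writing $n=2m+1$ forces $m=1$, so that $m+2=3$. This is the only input needed from the geometry; everything else is arithmetic bookkeeping.

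Next I would unwind the divisor hypothesis ``$d$ has no divisors less than $m+2$'' in this case. With $m+2=3$, the only integer strictly between $1$ and $3$ is $2$, so (reading the condition as referring to nontrivial divisors, the divisor $1$ being excluded) the hypothesis reduces precisely to $2\nmid d$, i.e. to $d$ being odd. Hence for odd $d$ the hypothesis of the Libgober--Wood theorem is satisfied, and that theorem yields that any two $d$-twisted homology $\mc P^3$ are homotopy equivalent.

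Finally I would connect this back to the set $\Pi_{d}^{6}$. By definition every $M\in\Pi_{d}^{6}$ is a (spin) $d$-twisted homology $\mc P^3$, so for odd $d$ any $M_1,M_2\in\Pi_{d}^{6}$ are in particular two $d$-twisted homology $\mc P^3$, and the theorem applies directly to give $M_1\simeq M_2$. One may also note, via Proposition 2.11, that for odd $d$ the spin condition is automatic, so that $\Pi_{d}^{6}$ is exactly the full set of diffeomorphism classes of $d$-twisted homology $\mc P^3$; this remark is not needed for the homotopy conclusion but clarifies that nothing is lost by restricting to the spin case. The only point requiring care --- rather than a genuine obstacle --- is the correct reading of the divisor condition, namely that ``no divisors less than $m+2$'' means no nontrivial divisors in that range, so that for $m+2=3$ it collapses exactly to the parity of $d$; beyond this the corollary is immediate.
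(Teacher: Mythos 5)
Your proposal is correct and matches the paper's approach exactly: the paper derives this corollary as the immediate specialization of the Libgober--Wood theorem with $n=3$ (so $m=1$, $m+2=3$), where the divisor condition collapses to $d$ being odd. Your additional observation via Proposition 2.11 that the spin hypothesis is automatic for odd $d$ is a nice clarification but, as you note, not needed for the conclusion.
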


When $d$ is even and for $M_1$, $M_2\in \Pi_{d}^{6}$, from \cite{LW} section 9 and \cite{Wall} section 7 we know $M_1\simeq M_2$ if and only if their first Pontrjagin classes satisfy $p_1(M_1)\equiv p_1 (M_2) \ mod(48)$, where $p_1(M_i)\in H^4(M_i,\mz)=\mz,\ i=1,2.$ Furthermore, $p_1(M_i)=4d-24\Phi(M_i)$, where $\Phi:\Pi_{d}^{6}\longrightarrow \Sigma^{3,3}\cong \mz$ is the isomorphism constructed above (cf \cite{Wall} section 6). We obtain:
\begin{prop}
If $d$ is even, for any $M_1,\ M_2\in \Phi^{-1}(2\mz)$ or $\Phi^{-1}(2\mz +1)$, $M_1\simeq M_2$.
\end{prop}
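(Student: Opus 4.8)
The plan is to reduce the homotopy-equivalence criterion recalled immediately before the statement to a parity condition on $\Phi$ by a single modular computation. By that discussion (combining \cite{LW} section 9 with \cite{Wall} section 7), for $d$ even and $M_1,M_2\in \Pi_{d}^{6}$ we have $M_1\simeq M_2$ if and only if $p_1(M_1)\equiv p_1(M_2)\pmod{48}$, where $p_1(M_i)\in H^4(M_i,\mz)=\mz$. Since the formula $p_1(M_i)=4d-24\Phi(M_i)$ was also recalled there, the first thing I would do is subtract the two expressions, so that the $4d$ terms cancel:
\[
p_1(M_1)-p_1(M_2)=24\bigl(\Phi(M_2)-\Phi(M_1)\bigr).
\]

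Next I would use the elementary observation that, for an integer $t$, one has $24t\equiv 0\pmod{48}$ if and only if $t$ is even. Applying this with $t=\Phi(M_2)-\Phi(M_1)\in\mz$, the congruence $p_1(M_1)\equiv p_1(M_2)\pmod{48}$ holds precisely when $\Phi(M_1)\equiv\Phi(M_2)\pmod 2$, that is, when $\Phi(M_1)$ and $\Phi(M_2)$ have the same parity. Combined with the criterion above, this shows $M_1\simeq M_2$ if and only if $\Phi(M_1)$ and $\Phi(M_2)$ are both even or both odd.

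Finally I would translate this parity statement back into the sets $\Phi^{-1}(2\mz)$ and $\Phi^{-1}(2\mz+1)$: two elements $M_1,M_2$ lie in the same one of these two preimages exactly when $\Phi(M_1)$ and $\Phi(M_2)$ have the same parity, which is precisely the condition just shown to be equivalent to $M_1\simeq M_2$. This yields the claim. Since the substantive geometric and classification content is entirely supplied by the quoted results of \cite{Wall} and \cite{LW}, there is no genuine obstacle here; the only point requiring any care is to use the coefficient and sign in $p_1=4d-24\Phi$ correctly so that the $4d$ terms cancel and one is left with a multiple of $24$, after which the passage from $\pmod{48}$ to $\pmod 2$ is automatic.
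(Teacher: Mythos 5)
Your proof is correct and follows exactly the route the paper intends: the paper states this proposition as an immediate consequence of the criterion $M_1\simeq M_2 \Leftrightarrow p_1(M_1)\equiv p_1(M_2)\pmod{48}$ together with $p_1(M_i)=4d-24\Phi(M_i)$, leaving the modular computation implicit, and your argument simply carries out that computation. Your observation that the equivalence is in fact an if-and-only-if (same parity of $\Phi$) is a harmless strengthening of the stated claim.
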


\section{Application to the free involution of $d$-twisted homology $\mc P^3$}
In \cite{LL}, Bang-he Li and Zhi L\"{u} proved
\begin{thm}[Li-L\"{u}]
For any embedding $i:S^3\hookrightarrow S^6$, there exists an involution $\sigma$ on $S^6$ such that the fixed point is $i(S^3)$.
\end{thm}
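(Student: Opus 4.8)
The plan is to build $\sigma$ by hand from a tubular-neighbourhood/complement decomposition, exploiting the one structural fact that makes a codimension-$3$ fixed sphere possible: the scalar $-\mathrm{id}_{\mr^3}$ is central in $O(3)$, so the fibrewise antipodal map on a rank-$3$ normal bundle is intrinsically defined and commutes with every linear clutching.

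First I would set up the neighbourhood. Using the triviality of the normal bundle of $i$ (as invoked in the proof of Theorem 4.3), the closed normal disc bundle $N$ of $i(S^3)$ is a $D^3$-bundle, and its fibrewise $-\mathrm{id}$ defines a smooth involution $\sigma_1$ with $\mathrm{Fix}(\sigma_1)=i(S^3)$ (the zero section) and $\sigma_1$ free off $i(S^3)$. Because $-\mathrm{id}_{\mr^3}$ is central, $\sigma_1$ is well defined independently of any trivialisation, and on $\partial N\cong S^2\times S^3$ it becomes the antipodal map $\tau(x,y)=(-x,y)$ on the normal-sphere factor, which is free. Exactly as in the proof of Theorem 4.3, the complement $W=\overline{S^6\setminus N}$ is homotopy equivalent to $S^2$, so by the $h$-cobordism theorem there is a diffeomorphism $h\colon S^2\times D^4\cong W$; thus $S^6=W\cup_\psi N$ with a gluing diffeomorphism $\psi\colon\partial(S^2\times D^4)=S^2\times S^3\to\partial N=S^2\times S^3$ satisfying $\deg(p_2\psi j_2)=1$ and $p_1\psi j_2\simeq pt$.

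Next I would reduce the whole theorem to an equivariance statement about $\psi$. On $W\cong S^2\times D^4$ put $\sigma_2(u,w)=(-u,w)$, the antipodal map on the $S^2$-factor; this is free everywhere and restricts on $\partial W$ to $\tau$. Splicing $\sigma_1$ and $\sigma_2$ yields a globally well-defined involution $\sigma$ on $S^6=W\cup_\psi N$ precisely when the two boundary involutions match through the gluing, that is, precisely when $\psi\circ\tau=\tau\circ\psi$; granting this, $\sigma$ is smooth (after a collar adjustment), involutive, free on $W$ and free on $N\setminus i(S^3)$, so $\mathrm{Fix}(\sigma)=i(S^3)$, which is exactly the assertion. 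Since changing the trivialisation $h$ replaces $\psi$ by $\psi\circ(\Phi|_{\partial})$ for $\Phi\in\mathrm{Diff}(S^2\times D^4)$, I am free to move $\psi$ within this coset, and by isotopy, neither of which affects $S^6$ nor the isotopy class of $i$.

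The main obstacle is therefore to make $\psi$ commute with $\tau$. Here I would invoke the analysis of self-diffeomorphisms of $S^2\times S^3$ already used in the proof of Theorem 4.3: the twist maps $G_g(x,y)=(g(y)x,y)$ with $g\colon S^3\to SO(3)$, together with the identification $\pi_3(SO(3))\cong\pi_3(SO(3)/SO(2))=\pi_3(S^2)$. Using the constraints $\deg(p_2\psi j_2)=1$ and $p_1\psi j_2\simeq pt$, I would show that, after modifying $h$ and up to isotopy, $\psi$ may be taken to be such a twist map $G_g$. The payoff is then immediate: since $g(y)\in SO(3)$ commutes with $-\mathrm{id}_{\mr^3}$, one has $G_g\circ\tau=\tau\circ G_g$, so the required equivariance holds and $\sigma$ is produced. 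The delicate point — and the step I expect to absorb most of the work — is controlling $\psi$ up to the permitted modifications and isotopy so that it genuinely lands on a $\tau$-equivariant (twist) representative; for the standard embedding this is transparent (the orthogonal involution of $\mr^7=\mr^4\oplus\mr^3$ that is $+1$ on $\mr^4$ and $-1$ on $\mr^3$), and the real content of the theorem is that the knotting recorded by a general $\psi$ can still be absorbed into a $\tau$-equivariant gluing.
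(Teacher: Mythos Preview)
The paper does not prove this statement at all: Theorem~5.1 is attributed to Li and L\"{u} and simply quoted from \cite{LL}; it is then used as a black box in the proof of Theorem~5.2. So there is no ``paper's own proof'' to compare your proposal against.

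On the substance of your outline, the architecture is reasonable (fibrewise $-\mathrm{id}$ on the normal $D^3$-bundle, antipodal on the $S^2$-factor of the complement, then match across the boundary), and indeed this is exactly the shape of the involution exploited in the proof of Theorem~5.2 once Li--L\"{u} is granted. However, your proposed mechanism for achieving the equivariance $\psi\circ\tau=\tau\circ\psi$ has a genuine gap. You claim that, after modifying $h$ and up to isotopy, $\psi$ can be taken to be a twist $G_g(x,y)=(g(y)x,y)$. But every such $G_g$ extends over the tubular neighbourhood $N\cong D^3\times S^3$ via $(\xi,y)\mapsto(g(y)\xi,y)$, and this extension fixes the core $\{0\}\times S^3$. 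Consequently, if $\psi$ could be brought to a twist by the moves you allow (isotopy and right-composition by $\Phi|_\partial$ with $\Phi\in\mathrm{Diff}(S^2\times D^4)$), the pair $\big((S^2\times D^4)\cup_\psi(D^3\times S^3),\,S^3\big)$ would be pair-diffeomorphic to the standard $(S^6,S^3)$, i.e.\ the embedding $i$ would be unknotted. Since $\Sigma^{6,3}\cong\mz$ is nontrivial, this is false for every nonstandard Haefliger knot. This is also consistent with the injectivity step in Theorem~4.3: two clutchings yield isotopic knots precisely when they differ by a twist $G_g$, so a single clutching is itself (equivalent to) a twist only for the trivial knot.

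In short, what remains is exactly the hard content of the Li--L\"{u} theorem: producing a $\tau$-equivariant representative of the gluing \emph{without} trivialising the knot. Your proposal identifies the target equation $\psi\tau=\tau\psi$ correctly but does not yet supply an idea that survives for knotted $i$.
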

In this section, we will use their result and the correspondence $\Phi$ constructed above to prove:
\begin{thm}
For any $M\in \Pi_{d}^{6}$, there exists a free involution on $M$.
\end{thm}
\begin{proof}
For any embedding $i:S^3\hookrightarrow S^6$, choose such an involution $\sigma:S^6\longrightarrow S^6$ with fixed point $i(S^3)$. By the equivariant tubular neighborhood theorem, there exists a $\mz/2\mz$ equivariant embedding $h: S^3\times D^3\longrightarrow S^6$ with $\sigma h(x,y)=h(x,-y)$, where the action of $\mz/2\mz\curvearrowright S^3\times D^3$ is $(x,y)\mapsto (x,-y)$. On the other hand, $\sigma$ is free on $S^6-h(S^3\times E^3)\cong S^2\times D^4$. So on $S^6=(S^2\times D^4)\cup_{\lambda}(S^3\times D^3)$, the involution restricted on the part of $S^3\times D^3$ can be $\sigma|S^3\times D^3:(x,y)\mapsto (x,-y)$. By the technique of \cite{MY}, we can make the diffeomorphism $\lambda$ satisfy $deg(p_2\lambda j_2)=1$ and $p_1\lambda j_2\simeq pt$.

Define a free involution $\iota$ on $S^2\times D^4$ by $\iota (x,y)=(-x,y)$. We see $\iota\lambda=\lambda \sigma $ and $\iota f_d=f_d\iota$ on $S^2\times S^3$. For the manifold, $(S_a^2\times D^4)\cup_{f_d \lambda}(S_b^2\times D^4)$, where $S_a^2, S_b^2$ are 2-spheres with label $a$ and $b$, we see $\Phi((S_a^2\times D^4)\cup_{f_d \lambda}(S_b^2\times D^4))=i:S^3\hookrightarrow S^6$.  We define a free involution $\delta$ on $(S_a^2\times D^4)\cup_{f_d \lambda}(S_b^2\times D^4)$ by $\delta|S_a^2\times D^4=\sigma $ and $\delta|S_b^2 \times D^4=\iota$. On the boundary $S^2\times S^3$ of these two copies, we see $\delta f_d \lambda=\iota f_d\lambda=f_d\iota\lambda=f_d \lambda\sigma=f_d\lambda\delta$. So $\delta$ is well-defined and is also a free involution.

In section 4, we've proved that $\Phi$ is an isomorphism. Thus, we conclude that for any $M\in \Pi_{d}^{6}$, there always exists a free involution.
\end{proof}

{\bf Acknowlegement} I would like to thank Professor Zhi L\"{u} for introducing this question to me and for many helpful comments and suggestions.

\noindent{\small  College of Information Technology,
Shanghai Ocean University,
999 Hucheng Huan Road, 201306, Shanghai, China.}\\
\noindent{  \emph{Email address:} \verb"weiwang@amss.ac.cn"}



\begin{thebibliography}{50}
\setlength{\itemsep}{-3pt}
\small
\bibitem{Haef1}Haefliger, Andr\'{e}. Knotted $(4k-1)$-spheres in $6k$-space, Ann. of Math. 75 (1962), 452-466.
\bibitem{Haef}Haefliger, Andr\'{e}. Differential embeddings of $S^{n}$ in $S^{n+q}$ for $q>2$, Ann. of Math. (2) 83 (1966), 402-436.
\bibitem{Lev}Levine, J. A classification of differentiable knots, Ann. of Math. (2) 82 (1965), 15-50.
\bibitem{LW}Libgober, Anatoly S. and Wood, John W. Differentiable structures on complete intersections. I,  Topology 21 (1982), no. 4, 469-482.
\bibitem{LL}Li, Bang-he. and L\"{u}, Zhi. Smooth free involution of $H{\Bbb C}\rm P^3$ and Smith conjecture for imbeddings of $S^3$ in $S^6$, Math. Ann. 339 (2007), no. 4, 879-889.
\bibitem{Masu}Masuda, M. Smooth involutions on homotopy $\mc P^3$, Am. J. Math. 106, (1984) 1487-1501.
\bibitem{MilnorM}Milnor, J. Morse theory. Based on lecture notes by M. Spivak and R. Wells. Annals of Mathematics Studies, No. 51
Princeton University Press, Princeton, N.J. 1963 vi+153 pp
\bibitem{Milnor}Milnor, John W. and Stasheff, James D. Characteristic classes. Annals of Mathematics Studies, No. 76. Princeton University Press, Princeton, N. J.; University of Tokyo Press, Tokyo, 1974. vii+331 pp.
\bibitem{MY}Montgomery, D. and Yang, C. T. Differentiable actions on homotopy seven spheres, Trans. Amer. Math. Soc. 122 (1966), 480-498.
\bibitem{Wall}Wall, C. T. C. Classification problems in differential topology. V. On certain $6$-manifolds, Invent. Math. 1 (1966), 355-374; corrigendum, ibid 2 (1966) 306.
\end{thebibliography}
\end{document}